\theoremstyle{plain}
\newtheorem{thm}{Theorem}[section]
\newtheorem{theorem}[thm]{Theorem}
\newtheorem{corollary}[thm]{Corollary}
\newtheorem{lemma}[thm]{Lemma}
\newtheorem{proposition}[thm]{Proposition}
\theoremstyle{definition}
\newtheorem{de}[thm]{Definition}
\newtheorem{remark}[thm]{Remark}
\newtheorem{example}[thm]{Example}
\newcommand{\Z}{\mathbb{Z}}
\newcommand{\id}{\mathrm{id}}
\newcommand{\ld}{\backslash}
\newcommand{\Mlt}{\mathop{\mathrm{Mlt}}}
\newcommand{\Aut}{\mathop{\mathrm{Aut}}}
\newcommand{\LMlt}{\mathop{\mathrm{LMlt}}}
\newcommand{\RMlt}{\mathop{\mathrm{RMlt}}}
\newcommand{\eqrel}[2]{\mathrel{\stackrel{\scriptstyle\eqref{#1}}{#2}}}
\numberwithin{equation}{section}
\begin{document}

\title{Distributive biracks and solutions of the Yang-Baxter equation}

\author{P\v remysl Jedli\v cka}
\author{Agata Pilitowska}
\author{Anna Zamojska-Dzienio}

\address{(P.J.) Department of Mathematics, Faculty of Engineering, Czech University of Life Sciences, Kam\'yck\'a 129, 16521 Praha 6, Czech Republic}
\address{(A.P., A.Z.) Faculty of Mathematics and Information Science, Warsaw University of Technology, Koszykowa 75, 00-662 Warsaw, Poland}

\email{(P.J.) jedlickap@tf.czu.cz}
\email{(A.P.) A.Pilitowska@mini.pw.edu.pl}
\email{(A.Z.) A.Zamojska-Dzienio@mini.pw.edu.pl}

\keywords{Yang-Baxter equation, set-theoretic solution, multipermutation solution, birack, distributivity, nilpotency, congruences.}
\subjclass[2010]{Primary: 16T25. Secondary: 20F18, 20B35, 08A30.}

\date{\today}

\begin{abstract}
We investigate a class of non-involutive solutions of the Yang-Baxter equation which generalize derived (self-distributive) solutions. In particular, we study generalized multipermutation solutions in this class. We show that the Yang-Baxter (permutation) groups of such solutions are nilpotent. We formulate the results in the language of biracks which allows us to apply universal algebra tools.
\end{abstract}

\maketitle
\section{Introduction}
The Yang-Baxter equation is a fundamental equation occurring in integrable models in statistical mechanics and quantum field theory~\cite{Jimbo}.
Let $V$ be a vector space. A {\em solution of the Yang--Baxter equation} is a linear mapping $r:V\otimes V\to V\otimes V$ such that
\[
(id\otimes r) (r\otimes id) (id\otimes r)=(r\otimes id) (id\otimes r) (r\otimes id).
\]
Since description of all possible solutions seems to be extremely difficult, Drinfeld \cite{Dr90} introduced the following simplification.

Let $X$ be a basis of the space $V$ and let $\sigma:X^2\to X$ and $\tau: X^2\to X$ be two mappings.
We say that $(X,\sigma,\tau)$ is a {\em set-theoretic solution of the Yang--Baxter equation} if
the mapping $x\otimes y \mapsto \sigma(x,y)\otimes \tau(x,y)$ extends to a solution of the Yang--Baxter
equation. It means that $r\colon X^2\to X^2$, where $r=(\sigma,\tau)$ satisfies the \emph{braid relation}:

\begin{equation}\label{eq:braid}
(id\times r)(r\times id)(id\times r)=(r\times id)(id\times r)(r\times id).
\end{equation}

A solution is called {\em non-degenerate} if the mappings $\sigma(x,\_)=\sigma_x:X\to X$ and $\tau(\_\,,y)=\tau_y:X\to X$ are bijections,
for all $x,y\in X$.
A solution $(X,\sigma,\tau)$ is {\em involutive} if $r^2=\mathrm{id}_{X^2}$, and it is
\emph{square free} if $r(x,x)=(x,x)$, for every $x\in X$.

In \cite[Section 3.2]{ESS} Etingof, Schedler and Soloviev introduced, for each involutive solution $(X,\sigma,\tau)$, the equivalence relation $\sim$ on the set $X$: for each $x,y\in X$

\[
x\sim y\quad \Leftrightarrow\quad \sigma_x=\sigma_y.
\]
They showed that the quotient set $X/\mathord{\sim}$ can be again endowed
with a structure of an involutive solution. This does not work for non-involutive solutions. In \cite{JPZ} we showed that in the non-involutive case the role similar to the relation $\sim$ is played by the relation $\approx$ defined on the set $X$ as follows:
for each $x,y\in X$

\[
x\approx y\quad \Leftrightarrow\quad \sigma_x=\sigma_y\quad\text{and}\quad\tau_x=\tau_y.
\]
We call a solution obtained on the set $X/\mathord{\approx}$, the {\em retraction} of the solution~$X$ and denote it by
$\mathrm{Ret}(X)$. A solution~$X$ is said to be a multipermutation solution of level~$k$, if $k$ is the smallest integer
such that $|\mathrm{Ret}^k(X)|=1$.

Two types of solutions are particularly well studied: involutive solutions and \emph{derived} solutions, i.e. those with all 
$\sigma_x$ or all $\tau_y$ being the identity mappings. In \cite{LV} Lebed and Vendramin have thoroughly investigated \emph{injective} solutions, which generalize involutive ones. In this paper, we focus on generalization of derived solutions given by \emph{distributive} solutions.

The Yang-Baxter group of a solution $(X,\sigma,\tau)$ is the group generated by all bijections $\sigma_x$ and $\tau_x$, for $x\in X$. 
There were several results for involutive solutions
connecting properties of the Yang-Baxter group and
multipermutation level of the solution \cite{CJO10,CJO14,GI18,GIC12,MBE,Rump07,Smo19}.

Let now $(X,\sigma,\tau)$ be a derived solution such that all $\sigma_x=\id_X$. Then $\tau_y\tau_x=\tau_{\tau_y(x)}\tau_y$ for all $x,y\in X$. Moreover, this condition holds for each element $\eta$ of the Yang-Baxter group, i.e. $\eta\tau_x=\tau_{\eta(x)}\eta$ for all $x\in X$. For derived solutions with all $\tau_x=\id_X$, one obtains the dual situation. Here, we consider solutions which are not-necessarily derived, but for each element $\eta$ in their Yang-Baxter group one has:
for each $x\in X$
\begin{equation}\label{e:aut}
\eta\sigma_x=\sigma_{\eta(x)}\eta\quad\text{and}\quad\eta\tau_x=\tau_{\eta(x)}\eta.
\end{equation}
These are called \emph{distributive} solutions. Condition \eqref{e:aut} means that each $\eta$ is actually an \emph{automorphism} of a solution (see \cite[Section 2]{GIM07}).

In \cite{JPZ19} we described the involutive distributive solutions. They are always multipermutation solutions of level 2 and their (involutive) Yang-Baxter groups are always abelian \cite[Theorem 7.6]{JPZ19}. In this paper we focus on non-involutive case. The situation is now more complex.

\medskip

\textbf{Main Theorem.}

\emph{Let $(X,\sigma,\tau)$ be a non-degenerate distributive solution of Yang-Baxter equation and let $k>1$. Then $X$ is a multipermutation solution of level at most $k$ if and only if the Yang-Baxter group of $X$ is nilpotent of class at most $k-1$.}

\medskip

This theorem cannot be generalized for non-distributive
solutions as there exist on one hand involutive solutions that are multipermutation but their Yang-Baxter groups
are not nilpotent \cite[Remark 7]{Smo19} and on the other hand there exist involutive solutions that are not multipermutation
but their Yang-Baxter groups are nilpotent \cite[Remark 6]{Smo19}.

It is known (see e.g. \cite{FJSK, S06, D15}) that there is a one-to-one correspondence between solutions of the Yang-Baxter equation and \emph{biracks} $(X,\circ,\ld_{\circ},\bullet,/_{\bullet})$ -- structures which satisfy some additional conditions \eqref{eq:lq}--\eqref{eq:b3}. This fact allows us to prove the Main Theorem using the language of biracks (Theorem \ref{thm:main}).

In particular, the correspondence exists between derived solutions and \emph{racks} \cite{ESG}. For each right rack $(X,\ast,\ld_\ast)$ its \emph{right multiplication group} $\RMlt(X)=\langle R_x\colon X\to X; a\mapsto a\ast x\mid\ x\in X\rangle$ is a subgroup of the automorphism group $\Aut(X)$. The similar property occurs for a left rack, and the reason for that in both cases, is a one-sided (self)distributivity. Therefore, we investigate biracks with at least one (self)distributive operation.  

The paper is organized as follows:
in Section 2 we characterize distributive biracks. We also give examples (Examples \ref{ex:wada} and \ref{ex:6}) of non-involutive distributive biracks which are not derived ones.
Section 3 is devoted to the quotient of distributive biracks by the relation $\approx$ 
as well as by the relation~$\sim$, that turns out to be
congruence as well, in the distributive case (Theorem \ref{th:left_congr}).
We also show (Lemma \ref{lm:sf}) that
the quotient birack by the congruence~$\sim$ is always idempotent and derived.
The last Section 4 contains the main result of the paper (Theorem \ref{thm:main}). We prove there that a distributive birack is of multipermutation level~$m$, for $m\geq 2$, if and only if its permutation group is nilpotent of class at most $m-1$.

\section{Distributive biracks}\label{sec:DB}
As we already mentioned there is a one-to-one correspondence between solutions of the Yang-Baxter equation and algebraic structures called biracks, which naturally appear in low-dimensional topology \cite{FJSK, EN}. In \cite{Kau99} Kauffman introduced the \emph{virtual knot theory} --- biracks play there a similar role as racks in classical knot theory. However, still not much is known about their structure and properties.

The following equational definition of a birack was given first by Stanovsk\'y in \cite{S06} (see also \cite{FJSK}).
\begin{de}
A structure $(X,\circ,\ld_{\circ},\bullet,/_{\bullet})$ with four binary operations is called a {\em birack}, if the following holds for any $x,y,z\in X$:
 \begin{align}
x\circ(x\ld_{\circ} y)=y=x\ld_{\circ} (x\circ y),\label{eq:lq}\\
(y/_{\bullet} x)\bullet x=y=(y\bullet x)/_{\bullet} x,\label{eq:rq}\\
x\circ(y\circ z)=(x\circ y)\circ((x\bullet y)\circ z),\label{eq:b1}\\
(x\circ y)\bullet((x\bullet y)\circ z)=(x\bullet(y\circ z))\circ(y\bullet z), \label{eq:b2}\\
(x\bullet y)\bullet z= (x\bullet (y\circ z))\bullet (y \bullet z).\label{eq:b3}
  \end{align}
\end{de}

\begin{example}[Lyubashenko, see \cite{Dr90}]\label{ex:Lyub}
Let $X$ be a non-empty set and let $f,g\colon X\to X$ be two bijections such that $fg=gf$. Define four binary operations: $x\circ y=f(y)$, $x\ld_\circ y=f^{-1}(y)$, $x\bullet y=g(x)$ and $x/_\bullet y=g^{-1}(x)$. Then $(X,\circ,\ld_{\circ},\bullet,/_{\bullet})$ is a birack called \emph{permutational}.
If $f=g=\id$, the birack is called a \emph{projection} one.
\end{example}

Conditions \eqref{eq:lq} and \eqref{eq:rq} mean that $(X,\circ,\ld_{\circ})$ is a \emph{left quasigroup} and $(X,\bullet,/_{\bullet})$ is a \emph{right quasigroup}.
Condition \eqref{eq:lq} simply means that
all \emph{left translations} $L_x\colon X\to X$ by $x$
\begin{align*}
L_x(a)= x\circ a,
\end{align*}
are bijections, with $L_x^{-1}(a)=x\ld_{\circ} a$. Equivalently, that for every $x,y\in X$, the equation $x\circ u=y$ has the unique solution $u=L_x^{-1}(y)$ in $X$. Similarly, Condition \eqref{eq:rq} gives that
all \emph{right translations} $\mathbf{R}_x\colon X\to X$ by $x$; $\mathbf{R}_x(a)= a\bullet x$,
are bijections with $\mathbf{R}_x^{-1}(a)=a/_{\bullet} x$.

\vskip 2mm

The \emph{left multiplication group} of a birack $(X,\circ,\ld_{\circ},\bullet,/_{\bullet})$ is the permutation group generated by left translations, i.e. the group $\LMlt(X)=\langle L_x:\ x\in X\rangle$. Similarly, one defines the \emph{right multiplication group} of $(X,\circ,\ld_{\circ},\bullet,/_{\bullet})$ as the permutation group generated by right translations, i.e. the group $\RMlt(X)=\langle \mathbf{R}_x:\ x\in X\rangle$. The permutation group $\Mlt(X)$ generated by all translations $L_x$ and $\mathbf{R}_x$ is called the \emph{multiplication group} of a birack.

We will say that a birack $(X,\circ,\ld_{\circ},\bullet,/_{\bullet})$ is \emph{left distributive}, if for every $x,y,z \in X$:
\begin{align}\label{eq:left}
x\circ(y\circ z)=(x\circ y)\circ(x\circ z)\quad \Leftrightarrow\quad L_xL_y=L_{x\circ y}L_x,
\end{align}
and it is \emph{right distributive}, if for every $x,y,z \in X$:
\begin{align}\label{eq:right}
(y\bullet z)\bullet x=(y\bullet x)\bullet(z\bullet x)\quad \Leftrightarrow\quad \mathbf{R}_x\mathbf{R}_z=\mathbf{R}_{z\bullet x}\mathbf{R}_x.
\end{align}
The birack is \emph{distributive} if it is left and right distributive. Each permutational birack is distributive.

\begin{remark}
A \emph{right rack} is a right distributive right quasigroup. These two conditions refer to second and third Reidemeister moves \cite{FR}. Biracks generalize racks in the following way. Let $(X,\circ,\ld_{\circ},\bullet,/_{\bullet})$ be a birack with all left translations being the identity permutation, i.e. for every $x,y, \in X$ one has
$$
x\circ y=y.
$$
Then $(X,\bullet,/_{\bullet})$ is a rack. 
\end{remark}
\vskip 2mm

A birack is {\em involutive} if it additionally satisfies, for every $x,y\in X$:
 \begin{align}
 &(x\circ y)\circ(x\bullet y)=x, \label{eq:linv}\\
&  (x\circ y)\bullet(x\bullet y)=y.\label{eq:rinv}
 \end{align}
By Condition \eqref{eq:linv}, operations $\bullet$ and $/_{\bullet}$ in an involutive birack are uniquely determined by operations $\circ$ and $\ld_{\circ}$ since $x\bullet y=(x\circ y)\ld_{\circ} x$. This allows to treat involutive biracks as left quasigroups satisfying additional conditions \cite[Proposition 1]{Rump}. In case of distributive involutive biracks one obtains the one-to-one correspondence to $2$-reductive racks \cite[Corollary 5.5]{JPZ19}.

In involutive biracks, $\LMlt(X)=\RMlt(X)=\Mlt(X)$. 
Moreover, by \cite[Corollary 5.8]{JPZ19} an involutive birack is left distributive if and only if it is right distributive.

\begin{example}
Let $(X,\circ,\ld_{\circ})$ be a left distributive left quasigroup (\emph{left rack}). Define operations $\bullet,/_{\bullet}\colon X\times X\to X$ as $x\bullet y=x=x/_{\bullet}y$. Then the structure $\mathbf{B}_L(X,\circ,\ld_{\circ})=(X,\circ,\ld_{\circ},\bullet,/_{\bullet})$ is a left distributive birack. Symmetrically, starting from a right rack $(Y,\triangleright,/_{\triangleright})$ and defining operations $\triangleleft,/_{\triangleleft}\colon Y\times Y\to Y$ as $x\triangleleft y=y=x\mathbin{/_{\triangleleft}}y$, one obtains a right distributive birack $\mathbf{B}_R(Y,\triangleright,/_{\triangleright})=(Y,\triangleleft,\ld_{\triangleleft},\triangleright,/_{\triangleright})$. We call such biracks \emph{left} and \emph{right derived biracks}, respectively. They are involutive only if they are projection ones.
\end{example}

\begin{example}
 Let $(X,\ast,\ld_{\ast})$ be a left rack and let $(Y,\triangle,/_\triangle)$ be a right rack. Then the product $\mathbf{B}_L(X,\ast,\ld_{\ast})\times \mathbf{B}_R(Y,\triangle,/_{\triangle})$ is a distributive birack with $\Mlt(X\times Y)\cong \LMlt(X)\times \RMlt(Y)$.
\end{example}

\begin{example}\label{ex:wada}
Let $(G,\cdot,e)$ be a group. Defining on the set $G$ binary operations as follows: $x\circ y=xy^{-1}x^{-1}$, $x\ld_{\circ} y=x^{-1}y^{-1}x$,  and $x\bullet y=xy^2$, $x/_{\bullet} y=xy^{-2}$, for $x,y\in G$, we obtain the birack $(G,\circ,\ld_{\circ},\bullet,/_{\bullet})$ known as the \emph{Wada switch} or \emph{Wada biquandle} (see \cite[Subsection 2.1(3)]{FJSK}).

Let $x,y,z\in G$. Direct calculations show that 
\begin{align*}
x\circ(y\circ z)&=xyzy^{-1}x^{-1},\\
(x\circ y)\circ (x\circ z)&=xy^{-1}zyx^{-1},
\end{align*} 
and the birack is left distributive if and only if $y^2z=zy^{2}$, for all $y,z\in G$, that means $y^2\in Z(G)$, for all $y\in G$. Furthermore,
\begin{align*}
(y\bullet z)\bullet x&=yz^2x^2, \qquad \text{ and}\\
(y\bullet x)\bullet(z\bullet x)&=yx^2zx^2zx^2.
\end{align*}
This implies that the birack is right distributive if and only if $x^2z=zx^{-2}$, for all $x,z\in G$,
which is equivalent to $x^4=e$ and $x^2\in Z(G)$, for all $x\in G$.
Thus, if the birack is right distributive then it is left distributive as well.

Moreover,
\begin{align*}
(x\circ y)\circ(x\bullet y)&=x(y^{-1}x^{-1})^2, \qquad \text{ and}\\
(x\circ y)\bullet(x\bullet y)&= xyxy^2.
 \end{align*}
Hence, by \eqref{eq:linv} and \eqref{eq:rinv}, the birack is involutive if and only if $(xy)^2=e$, for all $x,y\in G$, that means if $(G,\cdot,e)$ is an elementary abelian $2$-group.

For instance, there are five groups of order~$8$.
One of them is cyclic of exponent~$8$ and therefore
its Wada switch is not distributive. One of them is elementary abelian and its Wada switch is a projection birack. The other three groups (namely $\Z_4\times \Z_2$, $D_8$ and $Q_8$) are of exponent~$4$ and all their square elements fall within the
centers and therefore these groups yield non-involutive distributive
biracks.

\end{example}

\begin{example}\label{ex:wada_ab}
Let $(G,+,0)$ be an abelian group. Then, the birack operations defined in Example \ref{ex:wada} look as follows: $x\circ y=x\ld _{\circ}y=-y$ and $x\bullet y=x+2y$, $x/_{\bullet} y=x-2y$, for $x,y\in G$. Clearly, such a birack is always left distributive. It is a non-involutive distributive
birack if and only if $G$ is an abelian group of exponent
exactly~$4$.
\end{example}

\begin{lemma}\label{lem:ld_equiv}
Let $(X,\circ,\ld_{\circ},\bullet,/_{\bullet})$ be a birack. The following are equivalent:
\begin{enumerate}
\item [(i)] $(X,\circ,\ld_{\circ},\bullet,/_{\bullet})$ is left distributive;
\item [(ii)] $(X,\circ,\ld_{\circ},\bullet,/_{\bullet})$ satisfies, for every $x,y\in X$,
\begin{equation}\label{eq:birld}
L_x=L_{x\bullet y}=L_{\mathbf{R}_y(x)};
\end{equation}
\item [(iii)] $(X,\circ,\ld_{\circ},\bullet,/_{\bullet})$ satisfies, for every $x,y\in X$,
\begin{equation}\label{eq:birrd}
L_x=L_{x/_{\bullet} y}=L_{\mathbf{R}^{-1}_y(x)},
\end{equation}
\item [(iv)] Left translations by elements taken from the same orbit of the action of the group $\RMlt(X)$ on a set $X$ are equal permutations on $X$.
\end{enumerate}
\end{lemma}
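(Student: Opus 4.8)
The plan is to run a short cycle of implications, the engine of which is the birack axiom \eqref{eq:b1} rewritten in the language of left translations.

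First I would record that \eqref{eq:b1} says exactly $L_xL_y=L_{x\circ y}L_{x\bullet y}$ as an identity of permutations of $X$, valid for all $x,y\in X$. On the other hand, by definition \eqref{eq:left}, left distributivity is the identity $L_xL_y=L_{x\circ y}L_x$. Comparing the two right-hand sides for a fixed pair $(x,y)$ and cancelling the bijection $L_{x\circ y}$ on the left, left distributivity holds iff $L_x=L_{x\bullet y}$ for all $x,y\in X$; since $x\bullet y=\mathbf{R}_y(x)$, this is precisely \eqref{eq:birld}. Conversely, substituting $L_x=L_{x\bullet y}$ back into \eqref{eq:b1} recovers \eqref{eq:left}. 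This gives (i) $\Leftrightarrow$ (ii). The equivalence (ii) $\Leftrightarrow$ (iii) is a change of variable: assuming (ii), replace $x$ by $x/_{\bullet}y=\mathbf{R}_y^{-1}(x)$ and use \eqref{eq:rq}, namely $(x/_{\bullet}y)\bullet y=x$, to get $L_{x/_{\bullet}y}=L_{(x/_{\bullet}y)\bullet y}=L_x$, which is \eqref{eq:birrd}; the reverse implication is the symmetric computation with $\mathbf{R}_y$ in place of $\mathbf{R}_y^{-1}$.

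It remains to connect these with (iv). The implication (iv) $\Rightarrow$ (ii) is immediate, since $\mathbf{R}_y\in\RMlt(X)$, so $\mathbf{R}_y(x)$ lies in the $\RMlt(X)$-orbit of $x$, whence $L_x=L_{\mathbf{R}_y(x)}$. For the converse I would use that (ii) and (iii) together give $L_x=L_{\mathbf{R}_y^{\pm1}(x)}$ for all $x,y$, and then argue by induction on the length of a word representing $g\in\RMlt(X)$ in the generators $\mathbf{R}_y^{\pm1}$: the case $g=\id$ is trivial, and each inductive step is a single application of \eqref{eq:birld} or \eqref{eq:birrd}, yielding $L_x=L_{g(x)}$ for every $g\in\RMlt(X)$; hence any two elements in the same $\RMlt(X)$-orbit have the same left translation, which is (iv).

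The argument is essentially bookkeeping, so I do not expect a real obstacle; the one point that carries the content is the observation that \eqref{eq:b1} is the permutation identity $L_xL_y=L_{x\circ y}L_{x\bullet y}$, which, placed next to the left-distributivity identity $L_xL_y=L_{x\circ y}L_x$, forces $L_x=L_{x\bullet y}$ after cancellation. Everything else follows from the bijectivity of the right translations and a routine induction inside $\RMlt(X)$.
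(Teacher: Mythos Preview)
Your proof is correct and follows essentially the same route as the paper: both use \eqref{eq:b1} together with left distributivity to cancel the bijection $L_{x\circ y}$ and obtain $L_x=L_{x\bullet y}$, then pass between (ii) and (iii) via the substitution $x\mapsto x/_{\bullet}y$ using \eqref{eq:rq}. Your treatment of (ii)$\Leftrightarrow$(iv) via induction on word length in the generators $\mathbf{R}_y^{\pm1}$ is slightly more explicit than the paper's one-line appeal to the definition of orbits, but the content is the same.
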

\begin{proof}
Indeed, by \eqref{eq:b1} and \eqref{eq:lq}, we have for $x,y,z\in X$
\begin{align*}
&(x\circ y)\circ(x\circ z)=x\circ(y\circ z)\quad \Leftrightarrow\quad
(x\circ y)\circ(x\circ z)=(x\circ y)\circ((x\bullet y)\circ z)\quad \Leftrightarrow\\
&x\circ z=(x\bullet y)\circ z\; \Leftrightarrow\; L_x=L_{x\bullet y}.
\end{align*}

Additionally, by \eqref{eq:rq}, substituting of $x$ by $x/_{\bullet}y$ in \eqref{eq:birld} we immediately obtain
\[
L_x=L_{x/_{\bullet}y}.
\]
Similarly, substituting of $x$ by $x\bullet y$ in \eqref{eq:birrd} we have
\[
L_{x\bullet y}=L_x.
\]
Finally, (ii) $\Leftrightarrow$ (iv) follows by the fact that for any $x\in X$ its orbit of the action $\RMlt(X)$ on $X$ consists exactly of elements $\alpha(x)$ for $\alpha\in \RMlt(X)$.
\end{proof}

Analogously, due to \eqref{eq:b3} and \eqref{eq:rq}, a birack is right distributive if and only if
\begin{equation}\label{eq:bilrd}
\mathbf{R}_x=\mathbf{R}_{y\circ x}=\mathbf{R}_{L_y(x)},
\end{equation}
or equivalently, right translations by elements taken from the same orbit of the action of the group $\LMlt(X)$ on $X$ are equal permutations on $X$.

\vskip 2mm

By results of \cite[Section 3]{JPZ19} left (right) distributivity in involutive biracks is equivalent to commutativity of the left (right) multiplication group. For a non-involutive distributive birack it is not always true (see Example \ref{ex:6}). But even then left and right multiplication groups commute.

\begin{lemma}\label{lem:com_lr}
 Let $(X,\circ,\ld_{\circ},\bullet,/_{\bullet})$ be a distributive birack. Then,
\begin{equation}\label{eq:com_lr}
 [\LMlt(X),\RMlt(X)]=\{\id\}.
\end{equation}
\end{lemma}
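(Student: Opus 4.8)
The plan is to reduce the group-theoretic identity \eqref{eq:com_lr} to a single element-wise identity of the birack, and then to obtain that identity from axiom \eqref{eq:b2} by two short rewritings coming from distributivity. The first step is purely formal: since $\LMlt(X)$ is generated by the left translations $L_x$ and $\RMlt(X)$ by the right translations $\mathbf{R}_z$, and the centraliser of an element of a group is a subgroup, \eqref{eq:com_lr} holds as soon as $L_x\mathbf{R}_z=\mathbf{R}_zL_x$ for all $x,z\in X$; indeed, this makes $\LMlt(X)$ lie in the centraliser of each $\mathbf{R}_z$, hence $\RMlt(X)$ lies in the centraliser of each element of $\LMlt(X)$. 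Evaluating $L_x\mathbf{R}_z=\mathbf{R}_zL_x$ at an arbitrary $y\in X$ shows that this is equivalent to the single identity
\[
x\circ(y\bullet z)=(x\circ y)\bullet z\qquad (x,y,z\in X),
\]
which I will call $(\star)$. So everything reduces to proving $(\star)$.

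To prove $(\star)$ I would use two consequences of distributivity already recorded. From \eqref{eq:birld} (left distributivity) one gets $(a\bullet b)\circ c=a\circ c$ for all $a,b,c\in X$, i.e.\ a ``$\bullet b$'' standing immediately to the left of a ``$\circ$'' may be deleted; from \eqref{eq:bilrd} (right distributivity) one gets $a\bullet(b\circ c)=a\bullet c$ for all $a,b,c\in X$, i.e.\ a ``$b\circ$'' standing immediately inside a ``$\bullet$'' may be deleted. Now start from axiom \eqref{eq:b2},
\[
(x\circ y)\bullet((x\bullet y)\circ z)=(x\bullet(y\circ z))\circ(y\bullet z).
\]
On the left-hand side, the first rule turns the inner term $(x\bullet y)\circ z$ into $x\circ z$, leaving $(x\circ y)\bullet(x\circ z)$, and the second rule then turns this into $(x\circ y)\bullet z$. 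On the right-hand side, the second rule turns the inner term $x\bullet(y\circ z)$ into $x\bullet z$, leaving $(x\bullet z)\circ(y\bullet z)$, and the first rule then turns this into $x\circ(y\bullet z)$. Hence \eqref{eq:b2} collapses to $(x\circ y)\bullet z=x\circ(y\bullet z)$, which is exactly $(\star)$, and reading $(\star)$ back as $L_x\mathbf{R}_z=\mathbf{R}_zL_x$ and invoking the first step completes the proof.

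The computation is routine once the strategy is fixed, and I do not expect a genuine obstacle. The one decision to make is which axiom to start from: \eqref{eq:b2} is the natural candidate because it is the single axiom that mixes $\circ$ and $\bullet$ symmetrically, and $(\star)$ is essentially the unique ``cross-associativity'' law relating the two operations. The only point needing care is bookkeeping --- at each of the four rewriting steps one must check that the relevant translation index genuinely has the shape ``$a\bullet b$'' (so that \eqref{eq:birld} applies) or ``$b\circ c$'' (so that \eqref{eq:bilrd} applies) --- but this is immediate in each case.
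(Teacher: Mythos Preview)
Your proposal is correct and follows essentially the same route as the paper: both reduce the claim to the elementwise commutation $L_x\mathbf{R}_z=\mathbf{R}_zL_x$, i.e.\ to the identity $x\circ(y\bullet z)=(x\circ y)\bullet z$, and derive this from axiom~\eqref{eq:b2} using the simplifications \eqref{eq:birld} and \eqref{eq:bilrd}. The only cosmetic difference is that the paper runs a single chain of equalities from $L_x\mathbf{R}_y(z)$ to $\mathbf{R}_yL_x(z)$ with just two rewritings (one use of \eqref{eq:birld} in reverse, then \eqref{eq:b2}, then one use of \eqref{eq:bilrd}), whereas you simplify both sides of \eqref{eq:b2} separately with four rewritings; the content is identical.
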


\begin{proof}
For $x,y,z\in X$ one has,
\begin{align*}
  &L_x\mathbf{R}_y(z)\eqrel{eq:birld}{=} L_{x\bullet (z\circ y)} \mathbf{R}_y(z)
  = (x\bullet(z\circ y))\circ(z\bullet y)\eqrel{eq:b2}=\\
  &(x\circ z)\bullet((x\bullet z)\circ y) =
  \mathbf{R}_{(x\bullet z)\circ y}L_x(z) \eqrel{eq:bilrd}=
  \mathbf{R}_yL_x(z).
	\end{align*}
\end{proof}

Lemmas \ref{lem:ld_equiv} and \ref{lem:com_lr} allow us to characterize distributive biracks in an alternative way.

\begin{proposition}\label{alt_def}
 Let $(X,\circ,\ld_{\circ},\bullet,/_{\bullet})$
 be a structure with four binary operations. Then
 $(X,\circ,\ld_{\circ},\bullet,/_{\bullet})$ is a distributive birack if and only if the following conditions are satisfied
 \begin{enumerate}
  \item [(i)] $(X,\circ,\ld_\circ)$ is a left rack
  and $(X,\bullet,/_\bullet)$ is a right rack,
  \item [(ii)] $(X,\circ,\ld_{\circ},\bullet,/_{\bullet})$ satisfies \eqref{eq:birld}, \eqref{eq:bilrd}
  and \eqref{eq:com_lr}.
 \end{enumerate}
\end{proposition}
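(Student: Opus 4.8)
The statement is an equivalence, so the plan is to prove the two implications separately. The forward implication is essentially a repackaging of results already established, while the backward implication carries the real content.

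For the forward implication I would argue as follows. If $(X,\circ,\ld_{\circ},\bullet,/_{\bullet})$ is a distributive birack, then \eqref{eq:lq} says $(X,\circ,\ld_{\circ})$ is a left quasigroup, and together with left distributivity \eqref{eq:left} this makes it a left rack; dually $(X,\bullet,/_{\bullet})$ is a right rack, so (i) holds. Condition \eqref{eq:birld} holds by Lemma \ref{lem:ld_equiv}, condition \eqref{eq:bilrd} is the right-handed analogue recorded immediately after that lemma, and \eqref{eq:com_lr} is Lemma \ref{lem:com_lr}; hence (ii) holds. Nothing more is needed here.

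For the backward implication, assume (i) and (ii). From (i) we already have the quasigroup axioms \eqref{eq:lq}, \eqref{eq:rq} together with the two distributive laws \eqref{eq:left}, \eqref{eq:right}; so it suffices to verify the three braid-type axioms \eqref{eq:b1}, \eqref{eq:b2}, \eqref{eq:b3}, after which the structure is a birack and, by \eqref{eq:left}--\eqref{eq:right}, a distributive one. I would carry this out using the elementwise forms of the hypotheses, namely $(x\bullet y)\circ z=x\circ z$ from \eqref{eq:birld}, $z\bullet(y\circ x)=z\bullet x$ from \eqref{eq:bilrd}, $(x\circ y)\bullet z=x\circ(y\bullet z)$ from \eqref{eq:com_lr}, and $x\circ(y\circ z)=(x\circ y)\circ(x\circ z)$, $(x\bullet y)\bullet z=(x\bullet z)\bullet(y\bullet z)$ from \eqref{eq:left}, \eqref{eq:right}. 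Then axiom \eqref{eq:b1} follows by rewriting the inner factor $(x\bullet y)\circ z$ on its right-hand side as $x\circ z$ via \eqref{eq:birld} and applying left distributivity; axiom \eqref{eq:b3} follows symmetrically by rewriting $x\bullet(y\circ z)$ as $x\bullet z$ via \eqref{eq:bilrd} and applying right distributivity.

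The only step where more than one of the conditions in (ii) is needed — and where I expect the sole (still short) point of substance — is axiom \eqref{eq:b2}. Applying \eqref{eq:birld} to its left-hand side and \eqref{eq:bilrd} to its right-hand side reduces \eqref{eq:b2} to the identity $(x\circ y)\bullet(x\circ z)=(x\bullet z)\circ(y\bullet z)$. Here I would show that \emph{both} sides collapse to $(x\circ y)\bullet z$: the left side again by \eqref{eq:bilrd} (which gives $\mathbf{R}_{x\circ z}=\mathbf{R}_z$), and the right side by first using \eqref{eq:birld} to replace $L_{x\bullet z}$ by $L_x$ and then \eqref{eq:com_lr} to commute $L_x$ past $\mathbf{R}_z$. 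This is exactly the place that forces the commutativity condition \eqref{eq:com_lr} into (ii); without it the alternative characterization would fail. Once \eqref{eq:b1}, \eqref{eq:b2}, \eqref{eq:b3} are checked, the proof is complete.
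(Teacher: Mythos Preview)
Your proposal is correct and follows essentially the same route as the paper. The only difference is presentational: the paper packages each step as a two-way equivalence (under the left-rack hypothesis, \eqref{eq:b1}$\Leftrightarrow$\eqref{eq:birld} via Lemma~\ref{lem:ld_equiv}; dually \eqref{eq:b3}$\Leftrightarrow$\eqref{eq:bilrd}; and, assuming \eqref{eq:birld} and \eqref{eq:bilrd}, \eqref{eq:b2}$\Leftrightarrow$\eqref{eq:com_lr} by reading the chain in the proof of Lemma~\ref{lem:com_lr} in both directions), thereby handling the two implications simultaneously, whereas you treat the forward direction by citing the lemmas and then redo the backward direction by hand --- but your computations for \eqref{eq:b1}, \eqref{eq:b3}, and especially your reduction of \eqref{eq:b2} to $(x\circ y)\bullet z$ are exactly the content of those lemmas unrolled.
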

\begin{proof}
 If $(X,\circ,\ld_\circ)$ is a left rack then \eqref{eq:b1} and \eqref{eq:birld} are equivalent,
 as we showed in Lemma~\ref{lem:ld_equiv}. Analogously,
 \eqref{eq:b3} and \eqref{eq:bilrd} are equivalent
 when $(X,\bullet,/_\bullet)$ is a right rack.
 Finally, when supposing \eqref{eq:bilrd} and \eqref{eq:birld}, Conditions \eqref{eq:b2} and \eqref{eq:com_lr} are equivalent, as we saw in the proof
 of Lemma~\ref{lem:com_lr}.
\end{proof}
\begin{remark}
Condition $\text{(ii)}$ in Proposition \ref{alt_def} can be formulated in the equivalent way as follows: for all $x,y,z\in X$
	\begin{align*}
	&(x\bullet y)\circ z=x\circ z,\\
	&x\bullet (y\circ z)=x\bullet z,\\
	&x\circ (y\bullet z)=(x\circ y)\bullet z.
	\end{align*}
\end{remark}

Obviously, the left distributivity of  $(X,\circ,\ld_{\circ},\bullet,/_{\bullet})$ means that all left translations are automorphisms of  $(X,\circ,\ld_{\circ})$.
Additionally, directly from \eqref{eq:left}
we obtain that the left distributivity implies,
 for every $x,y\in X$,
\begin{align}\label{eq:ab}
L_{x\circ y}=L_xL_{y}L_x^{-1} \quad {\rm and}\quad L_{x\ld_{\circ} y}=L_x^{-1}L_{y}L_x.
\end{align}
Note also that, for an arbitrary automorphism $\alpha$ of $(X,\circ,\ld_{\circ})$, we have
\begin{align}\label{eq:ab1}
L_{\alpha(x)}(y)=\alpha(x)\circ y=\alpha(x\circ \alpha^{-1}(y))=\alpha L_{x}\alpha^{-1}(y).
\end{align}

Similarly, for a right distributive birack $(X,\circ,\ld_{\circ},\bullet,/_{\bullet})$, we have
\begin{align}\label{eq:ab2}
\mathbf{R}_{x\bullet y}=\mathbf{R}_y\mathbf{R}_{x}\mathbf{R}_y^{-1} \quad {\rm and}\quad \mathbf{R}_{x/_\bullet y}=\mathbf{R}_y^{-1}\mathbf{R}_{x}\mathbf{R}_y.
\end{align}
Moreover, for an arbitrary automorphism $\beta$ of $(X,\bullet,/_\bullet)$, we have
\begin{align}\label{eq:ab3}
\mathbf{R}_{\beta(x)}(y)=y\bullet\beta(x)=\beta(\beta^{-1}(y)\bullet x)=\beta \mathbf{R}_{x}\beta^{-1}(y).
\end{align}

\begin{example}\label{ex:6}
Let $(X,\circ,\ld_\circ,\bullet,/_\bullet)$ be the following
structure: $X=\{1,2,3,4,5,6\}$ and
\begin{align*}
 L_1&=(3 5 4 6) & L_2&=(6 4 5 3) \\
 L_3=L_4&=(1 2)(5 6) & L_5=L_6&=(1 2)(3 4)\\
 \mathbf{R}_1=\mathbf{R}_2&=\mathrm{id} &
 \mathbf{R}_3=\mathbf{R}_4= \mathbf{R}_5=\mathbf{R}_6&=(3 4)(5 6)
\end{align*}
By hand, or using a GAP library named `RiG' \cite{RIG}, we can show that the automorphism group of~$(X,\circ,\ld_{\circ},\bullet,/_{\bullet})$,
is the group generated by the permutations $L_1$, $L_3$ and $L_5$.
We can now easily prove that this structure is a distributive birack.
Indeed, all $L_x$, for $x\in X$, are automorphims of $(X,\circ,\ld_\circ)$,
as well as all $\mathbf{R}_x$, for $x\in X$, are automorphisms of $(X,\bullet,/_\bullet)$
and therefore $(X,\circ,\ld_\circ,\bullet,/_\bullet)$ is both left and right distributive birack.
  The group $\LMlt(X)$ is  a non-abelian group of order~$8$
  having two orbits, namely $\{1,2\}$ and $\{3,4,5,6\}$.
  Condition~\eqref{eq:birld} is satisfied
  since $\mathbf{R}_1=\mathbf{R}_2$ and $\mathbf{R}_3=\mathbf{R}_4= \mathbf{R}_5=\mathbf{R}_6$.
  The group $\RMlt(X)$ is a two-element group with four orbits
  $\{1\}$, $\{2\}$, $\{3,4\}$ and $\{5,6\}$.
  Condition~\eqref{eq:bilrd} is satisfied
  since $L_3=L_4$ and $L_5=L_6$.
  The group $\RMlt(X)$ is equal to the center of $\LMlt(X)$ and we have therefore Condition~\eqref{eq:com_lr}.
  We may also notice that this example is not a Wada biquandle (Example~\ref{ex:wada}) since there exists no $6$-element group of exponent~$4$.
\end{example}

\begin{proposition}\label{prop:aut}
 Let $(X,\circ,\ld_{\circ},\bullet,/_{\bullet})$
 be a distributive birack. Then, for each $x\in X$,
 the bijections $L_x$ and $\mathbf{R}_x$ are
 automorphisms of~$(X,\circ,\ld_{\circ},\bullet,/_{\bullet})$.
\end{proposition}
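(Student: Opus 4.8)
The plan is to verify that each $L_x$ and each $\mathbf{R}_x$ preserves all four operations $\circ,\ld_\circ,\bullet,/_\bullet$, using the observation that half of this is already built into the definition of distributivity and that the remaining half collapses to Lemma~\ref{lem:com_lr}. Indeed, left distributivity \eqref{eq:left} is precisely the identity $L_xL_y=L_{x\circ y}L_x$, i.e. the assertion that every $L_x$ is an automorphism of the left rack $(X,\circ,\ld_\circ)$, so $L_x$ preserves $\circ$ and $\ld_\circ$; dually, by \eqref{eq:right}, every $\mathbf{R}_x$ preserves $\bullet$ and $/_\bullet$. Hence it remains to show that each $L_x$ preserves $\bullet$ (and then $/_\bullet$), and each $\mathbf{R}_x$ preserves $\circ$ (and then $\ld_\circ$).

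For $L_x$ and $\bullet$: fixing $x$, the condition $L_x(y\bullet z)=L_x(y)\bullet L_x(z)$ for all $y,z$ unwinds, via $y\bullet z=\mathbf{R}_z(y)$, into the operator identity $L_x\mathbf{R}_z=\mathbf{R}_{L_x(z)}L_x$ for every $z\in X$. Since $L_x(z)$ lies in the $\LMlt(X)$-orbit of $z$, Condition~\eqref{eq:bilrd} gives $\mathbf{R}_{L_x(z)}=\mathbf{R}_z$, so the identity collapses to $L_x\mathbf{R}_z=\mathbf{R}_zL_x$, which holds by Lemma~\ref{lem:com_lr}. Preservation of $/_\bullet$ is then automatic: $L_x$ is a bijection of $X$, so applying it to the right-quasigroup law $(y/_\bullet z)\bullet z=y$ from \eqref{eq:rq} gives $L_x(y/_\bullet z)\bullet L_x(z)=L_x(y)$, whence $L_x(y/_\bullet z)=L_x(y)/_\bullet L_x(z)$.

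The case of $\mathbf{R}_x$ is symmetric: fixing $x$, the condition $\mathbf{R}_x(y\circ z)=\mathbf{R}_x(y)\circ\mathbf{R}_x(z)$ for all $y,z$ unwinds, via $y\circ z=L_y(z)$, into $\mathbf{R}_xL_y=L_{\mathbf{R}_x(y)}\mathbf{R}_x$ for every $y\in X$; since $\mathbf{R}_x(y)$ is in the $\RMlt(X)$-orbit of $y$, Condition~\eqref{eq:birld} gives $L_{\mathbf{R}_x(y)}=L_y$, reducing this to $\mathbf{R}_xL_y=L_y\mathbf{R}_x$ — again Lemma~\ref{lem:com_lr} — and preservation of $\ld_\circ$ follows from \eqref{eq:lq} and bijectivity exactly as before. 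I do not anticipate a genuine obstacle here; the only point requiring care is pairing each translation with the correct orbit-constancy identity (\eqref{eq:birld} for the $L$'s on $\RMlt$-orbits, \eqref{eq:bilrd} for the $\mathbf{R}$'s on $\LMlt$-orbits), after which the entire statement reduces to the single commutation relation $[\LMlt(X),\RMlt(X)]=\{\id\}$.
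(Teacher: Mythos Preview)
Your proof is correct and follows essentially the same route as the paper: use distributivity directly for $L_x$ on $(X,\circ,\ld_\circ)$ and $\mathbf{R}_x$ on $(X,\bullet,/_\bullet)$, and then handle the cross terms by combining the orbit-constancy identities \eqref{eq:birld}, \eqref{eq:bilrd} with the commutation relation of Lemma~\ref{lem:com_lr}. The only cosmetic difference is the order in which you chain the equalities (you reduce $\mathbf{R}_{L_x(z)}$ to $\mathbf{R}_z$ first and then invoke commutation, whereas the paper commutes first and then applies \eqref{eq:bilrd}); the argument is otherwise identical.
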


\begin{proof}
 The property $L_x(y\circ z)=L_x(y)\circ L_x(z)$
 is the definition of left distributivity. Substituting $z\mapsto y\ld_\circ u$ we obtain $L_x(u)=L_x(y)\circ L_x(y\ld_\circ u)$ from where we get
\begin{align}\label{eq:aut}
 L_x(y) \ \ld_\circ \ L_x(u) = L_x(y\ld_\circ u).
\end{align}
Now
 \begin{align*}
  L_x(y\bullet z) &= L_x \mathbf{R}_z (y) \eqrel{eq:com_lr}=
  \mathbf{R}_z L_x(y) \eqrel{eq:bilrd}= \mathbf{R}_{L_x(z)} L_x(y) = L_x(y) \bullet L_x(z),\\
  L_x(y/_\bullet z) &= L_x \mathbf{R}_z^{-1} (y) \eqrel{eq:com_lr}=
  \mathbf{R}_z^{-1} L_x(y) \eqrel{eq:bilrd}= \mathbf{R}_{L_x(z)}^{-1} L_x(y) = L_x(y) /_\bullet L_x(z),
 \end{align*}
  and therefore $L_x$ is a homomorphism of $(X,\bullet,/_\bullet)$. The claim
  for $\mathbf{R}_x$ is proved analogously.
\end{proof}

By Proposition \ref{prop:aut} and Conditions \eqref{eq:ab1}, \eqref{eq:ab3} one immediately obtains that groups $\LMlt(X)$, $\RMlt(X)$, $\Mlt(X)$ are normal subgroups of the automorphism group of a distributive birack\\ $(X,\circ,\ld_{\circ},\bullet,/_{\bullet})$.

\section{Multipermutational biracks}\label{sec:MPB}
Gateva-Ivanova characterized involutive multipermutation solutions of the Yang-Baxter equation in the language of some identities satisfied by corresponding biracks, see e.g. \cite[Proposition 4.7]{GI18}. We will generalize her result for (non-involutive) left distributive case. Let us start with some auxiliary definitions.

\begin{de}
Let $m\in \mathbb{N}$. A birack $(X,\circ,\ld_{\circ},\bullet,/_{\bullet})$ is called:
\begin{enumerate}
\item [(i)] \emph{idempotent}, if for every $x\in X$:
\begin{align}\label{eq:idemp}
x\circ x=x=x\bullet x
\end{align}
\item [(ii)] \emph{left}  $m$-\emph{reductive},  if for every $x_0,x_1,x_2,\ldots,x_m\in X$:
\begin{align}\label{eq:mred}
(\ldots((x_0\circ x_1)\circ x_2)\ldots)\circ x_m=(\ldots((x_1\circ x_2)\circ x_3)\ldots)\circ x_m
\end{align}
\item [(iii)] \emph{right} $m$-\emph{reductive},  if for every $x_0,x_1,x_2,\ldots,x_m\in X$:
\begin{align}\label{eq:rmred}
x_0\bullet(\ldots(x_{m-2}\bullet(x_{m-1}\bullet x_m))\ldots )=x_0\bullet(\ldots(x_{m-3}\bullet(x_{m-2}\bullet x_{m-1}))\ldots )
\end{align}
\item [(iv)] \emph{left} $m$-\emph{permutational}, if for every $x,y,x_1,x_2,\ldots,x_m\in X$:
\begin{align}\label{eq:mper}
(\ldots((x\circ x_1)\circ x_2)\ldots)\circ x_m=(\ldots((y\circ x_1)\circ x_2)\ldots)\circ x_m
\end{align}
\item [(v)] \emph{right} $m$-\emph{permutational}, if for every $x,y,x_1,x_2,\ldots,x_m\in X$:
\begin{align}\label{eq:rmper}
x_0\bullet(\ldots(x_{m-2}\bullet(x_{m-1}\bullet x))\ldots )=x_0\bullet(\ldots(x_{m-2}\bullet(x_{m-1}\bullet y))\ldots ).
\end{align}
\end{enumerate}
\end{de}

\noindent A birack is $m$-\emph{reductive} ($m$-\emph{permutational}) if it is both left and right $m$-reductive ($m$-permutational).

\begin{example}
 A distributive birack from Example~\ref{ex:wada} is right $2$-reductive because $x\bullet (y \bullet z)=x(yz^2)^2=xyz^2yz^2=xy^2=x\bullet y$, since $z^2\in Z(G)$ and $z^4=e$.
\end{example}

\begin{lemma}\label{lem:red-perm}
 Let $(X,\circ,\ld_{\circ},\bullet,/_{\bullet})$ be a left $m$-permutational birack.
 \begin{itemize}
  \item[(i)] If $(X,\circ,\ld_{\circ},\bullet,/_{\bullet})$ is idempotent then it is left $m$-reductive.
  \item[(ii)] If $(X,\circ,\ld_{\circ},\bullet,/_{\bullet})$ is left distributive and $m\geq 2$ then
  it is left $m$-reductive.
 \end{itemize}
\end{lemma}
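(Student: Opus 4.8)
The plan is to reduce both items to the defining identity of left $m$-permutationality by producing, from any $x_0$, a suitable ``shifted'' element whose behaviour under the iterated $\circ$-product matches $x_1$'s. Throughout write $x_0\circ x_1\circ\cdots\circ x_m$ for the left-normed product $(\cdots((x_0\circ x_1)\circ x_2)\cdots)\circ x_m$, and recall that left $m$-permutationality \eqref{eq:mper} says this product does not depend on $x_0$; equivalently, in terms of translations, $L_{x_1}L_{x_2}\cdots$ — more precisely the composite $L_{w}$ where $w$ ranges over the appropriate partial products — the map $x_0\mapsto x_0\circ x_1\circ\cdots\circ x_m$ is constant. For item~(i): by idempotency $x_1\circ x_1\circ x_2\circ\cdots\circ x_m = x_1\circ x_2\circ\cdots\circ x_m$, while left $m$-permutationality applied with the distinguished first argument gives $x_0\circ x_1\circ\cdots\circ x_m = x_1\circ x_1\circ x_2\circ\cdots\circ x_m$ (replacing the leading $x_0$ by $x_1$). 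Chaining these two equalities yields exactly \eqref{eq:mred}, so the birack is left $m$-reductive.

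For item~(ii), the idea is that left distributivity lets us ``absorb'' a copy of $x_1$ into the first argument. Concretely, left distributivity \eqref{eq:left} gives $x_0\circ(x_1\circ x_2)=(x_0\circ x_1)\circ(x_0\circ x_2)$, and more generally, using \eqref{eq:ab} and induction, one can rewrite $x_0\circ x_1\circ x_2\circ\cdots\circ x_m$ by pushing the leading $L_{x_0}$ through the remaining product; the key consequence is an identity of the shape
\begin{align*}
x_0\circ x_1\circ x_2\circ\cdots\circ x_m = (x_0\circ x_1)\circ(x_0\circ x_2)\circ\cdots\circ(x_0\circ x_m),
\end{align*}
obtained by repeatedly applying \eqref{eq:left} from the inside out (this is where $m\geq 2$ is needed: for $m=1$ there is nothing to push through and the statement can fail). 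Now apply left $m$-permutationality to the right-hand side, replacing its leading argument $x_0\circ x_1$ by $x_1\circ x_1 = $ (after another use of distributivity or idempotency-type manipulation) — more carefully, one applies \eqref{eq:mper} to replace the first slot by an arbitrary element and chooses it so that, undoing the distributivity rewrite, the result is $x_1\circ x_2\circ\cdots\circ x_m$.

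Concretely I expect the cleanest route is: use left $m$-permutationality to replace $x_0$ by $x_1$ in the original product, obtaining $x_0\circ x_1\circ\cdots\circ x_m = x_1\circ x_1\circ x_2\circ\cdots\circ x_m$; then it suffices to show $x_1\circ x_1\circ x_2\circ\cdots\circ x_m = x_1\circ x_2\circ\cdots\circ x_m$, i.e. that the extra leading $x_1$ is redundant. By left distributivity, $x_1\circ(x_1\circ x_2\circ\cdots\circ x_m)$ equals $L_{x_1}$ applied to $x_1\circ x_2\circ\cdots\circ x_m$, and since $L_{x_1}$ is an automorphism (Proposition \ref{prop:aut}) one can also write $x_1\circ x_1\circ x_2\circ\cdots\circ x_m = (x_1\circ x_1)\circ(x_1\circ x_2)\circ(x_1\circ x_3)\circ\cdots = x_1\circ(x_1\circ x_2)\circ(x_1\circ x_3)\circ\cdots$, and then left $m$-permutationality (applicable since there are still $m$ right-hand arguments and $m\ge 2$) lets us replace the leading $x_1$ by anything, in particular by a value that collapses the expression back to $x_1\circ x_2\circ\cdots\circ x_m$. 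The main obstacle, and the part deserving careful bookkeeping, is precisely this last manipulation: making sure the distributivity rewrite produces exactly $m$ (not $m-1$) right-hand slots so that \eqref{eq:mper} applies, and verifying the $m\ge 2$ hypothesis is used exactly where the $m=1$ case would otherwise break. Everything else is a routine unwinding of \eqref{eq:left}, \eqref{eq:ab}, and \eqref{eq:idemp}.
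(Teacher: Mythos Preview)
Your argument for~(i) is correct and is essentially what the paper does (the paper just says ``evident'').

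For~(ii), your opening move---apply \eqref{eq:mper} to replace $x_0$ by $x_1$ and reduce to showing
\[
(\cdots((x_1\circ x_1)\circ x_2)\cdots)\circ x_m=(\cdots(x_1\circ x_2)\cdots)\circ x_m
\]
---is exactly right, and is what the paper does too. But the step that follows is broken. The identity you write down,
\[
(\cdots((x_0\circ x_1)\circ x_2)\cdots)\circ x_m=(x_0\circ x_1)\circ(x_0\circ x_2)\circ\cdots\circ(x_0\circ x_m),
\]
is \emph{false} in a left rack: already for $m=2$ it reads $(x_0\circ x_1)\circ x_2=(x_0\circ x_1)\circ(x_0\circ x_2)$, which by left cancellation forces $x_2=x_0\circ x_2$ for all $x_0,x_2$. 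The same error recurs when you write $x_1\circ x_1\circ x_2\circ\cdots=(x_1\circ x_1)\circ(x_1\circ x_2)\circ\cdots$; you are silently conflating the left-normed product $((x_1\circ x_1)\circ x_2)\circ\cdots$ with $x_1\circ\bigl((\cdots(x_1\circ x_2)\cdots)\circ x_m\bigr)$, and only the latter equals $L_{x_1}$ applied to a left-normed product. Left distributivity \eqref{eq:left} rewrites $x\circ(y\circ z)$, not $(x\circ y)\circ z$, so it does not ``push $L_{x_0}$ through'' an already left-nested word.

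The fix is much simpler than what you attempt. From \eqref{eq:ab} with $x=y=x_1$ you get $L_{x_1\circ x_1}=L_{x_1}L_{x_1}L_{x_1}^{-1}=L_{x_1}$, hence $(x_1\circ x_1)\circ x_2=x_1\circ x_2$. This single equality collapses the extra leading $x_1$ immediately, and the rest of the left-normed product is then literally the same on both sides. This is the paper's proof of~(ii), and it is also the precise place where $m\geq 2$ is used: you need at least one argument $x_2$ to the right of $x_1\circ x_1$ to exploit $L_{x_1\circ x_1}=L_{x_1}$. No second application of $m$-permutationality, no automorphism rewriting, is needed.
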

\begin{proof}
 (i) is evident. For (ii) we have
 \begin{align*}
  &(\cdots((x_0 \circ x_1)\circ x_2)\cdots)\circ x_m
  \eqrel{eq:mper}= (\cdots((x_1 \circ x_1)\circ x_2)\cdots)\circ x_m
  =\\
	&(\cdots((L_{x_1\circ x_1} (x_2))\cdots)\circ x_m
  \eqrel{eq:ab}= (\cdots((L_{x_1} (x_2))\cdots)\circ x_m
  = (\cdots(x_1\circ x_2)\cdots)\circ x_m,
 \end{align*}
for every $x_0,x_1,x_2,\ldots,x_m\in X$.
\end{proof}

By Example \ref{ex:Lyub} there exists
left $1$-permutational left distributive birack that is not left $1$-reductive. It is a permutational birack with $f\neq\id$.

\vskip 3mm

Let $(X,\circ,\ld_{\circ},\bullet,/_{\bullet})$ be a birack. Etingof, Schedler and Soloviev defined in \cite[Section 3.2]{ESS} the relation
\begin{equation}\label{rel:sim}
a\sim b \quad \Leftrightarrow\quad L_a=L_b \quad \Leftrightarrow\quad \forall x\in X\quad a\circ x=b\circ x.
\end{equation}

By their results, the relation $\sim$ is a \emph{congruence} of an involutive birack, i.e. an equivalence relation on the set $X$ compatible with all four operations in a birack $(X,\circ,\ld_{\circ},\bullet,/_{\bullet})$. For a detailed definition see \cite[Definition 3.1]{JPZ}.

In the case of non-involutive biracks,
the equivalence~$\sim$ need not be a congruence
(see \cite[Example 3.4]{JPZ})
but it is so if the birack is left distributive.
\begin{theorem}\label{th:left_congr}
Let $(X,\circ,\ld_{\circ},\bullet,/_{\bullet})$ be a left distributive birack. Then the relation \eqref{rel:sim} is a congruence of
$(X,\circ,\ld_{\circ},\bullet,/_{\bullet})$.
\end{theorem}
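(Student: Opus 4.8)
The plan is to verify directly that $\sim$ is an equivalence relation compatible with each of the four operations $\circ,\ld_{\circ},\bullet,/_{\bullet}$. That $\sim$ is an equivalence relation is immediate from its definition as equality of left translations. For the compatibility part, I would first observe that by transitivity of $\sim$ it is enough to prove, for each operation $*$, the two one-variable statements ``$a\sim a'$ implies $a*b\sim a'*b$'' and ``$b\sim b'$ implies $a*b\sim a*b'$''; the full statement $a\sim a',\ b\sim b'\ \Rightarrow\ a*b\sim a'*b'$ is then obtained by composing these two steps. Recall also that each $L_x$ is a bijection by \eqref{eq:lq}, so expressions like $L_x^{-1}$ below make sense.

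For the operations $\circ$ and $\ld_{\circ}$ I would invoke the conjugation formulas \eqref{eq:ab}, valid in any left distributive birack: $L_{x\circ y}=L_xL_yL_x^{-1}$ and $L_{x\ld_{\circ}y}=L_x^{-1}L_yL_x$. If $L_a=L_{a'}$ then $L_{a\circ b}=L_aL_bL_a^{-1}=L_{a'}L_bL_{a'}^{-1}=L_{a'\circ b}$ and likewise $L_{a\ld_{\circ}b}=L_{a'\ld_{\circ}b}$, so $\sim$ is compatible with $\circ$ and $\ld_{\circ}$ in the first argument; if $L_b=L_{b'}$, substituting $L_{b'}$ for $L_b$ in the same two formulas yields $L_{a\circ b}=L_{a\circ b'}$ and $L_{a\ld_{\circ}b}=L_{a\ld_{\circ}b'}$, handling the second argument. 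This settles both operations.

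For the operations $\bullet$ and $/_{\bullet}$ the relevant fact, equivalent to left distributivity by Lemma~\ref{lem:ld_equiv}, is that $L_{x\bullet y}=L_x=L_{x/_{\bullet}y}$ for all $x,y\in X$; in particular the left translation by $x\bullet y$ or $x/_{\bullet}y$ does not depend on $y$ at all. Hence compatibility in the second argument is automatic: $L_{a\bullet b}=L_a=L_{a\bullet b'}$ and $L_{a/_{\bullet}b}=L_a=L_{a/_{\bullet}b'}$ with no hypothesis on $b,b'$. And if $L_a=L_{a'}$, then $L_{a\bullet b}=L_a=L_{a'}=L_{a'\bullet b}$, and similarly for $/_{\bullet}$, giving compatibility in the first argument. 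Together with the previous paragraph, this shows $\sim$ is compatible with all four operations, hence a congruence.

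There is no real obstacle here; the proof is essentially an assembly of the identities \eqref{eq:ab}, \eqref{eq:birld} and \eqref{eq:birrd} established earlier, and the only point requiring a moment's care is to reduce two-sided compatibility to one-variable checks via transitivity before applying them. I would also remark that only left distributivity is used throughout (each of \eqref{eq:ab}, \eqref{eq:birld}, \eqref{eq:birrd} is a consequence of it), so the hypothesis of the theorem is exactly what the argument needs; the ``dual'' relation defined by equality of right translations would instead require right distributivity.
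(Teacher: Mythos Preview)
Your proof is correct and follows essentially the same approach as the paper: both arguments hinge on the conjugation identities \eqref{eq:ab} for $\circ$ and $\ld_{\circ}$, and on $L_{x\bullet y}=L_x=L_{x/_{\bullet}y}$ (i.e.\ \eqref{eq:birld} and \eqref{eq:birrd}) for $\bullet$ and $/_{\bullet}$. The only cosmetic difference is that the paper verifies the two-variable compatibility $a\sim x,\ b\sim y\Rightarrow a*b\sim x*y$ in a single step for each operation, whereas you first reduce to one-variable checks via transitivity; this changes nothing substantive.
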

\begin{proof}
By \eqref{eq:lq}, \eqref{eq:ab} and \eqref{eq:birld} the proof is straightforward. Let $a\sim x$ and $b\sim y$. Then
\begin{align*}
&L_{a\circ b} \stackrel{\scriptsize \eqref{eq:ab}}= L_aL_bL^{-1}_a =L_xL_yL^{-1}_x=L_{x\circ y}\quad \Rightarrow\quad a\circ b\sim x\circ y,\\
&L_{a\ld_{\circ} b}\stackrel{\scriptsize \eqref{eq:ab}}= L_a^{-1}L_bL_a=L_x^{-1}L_yL_x=L_{x\ld_{\circ} y}\quad \Rightarrow\quad a\ld_{\circ} b\sim x\ld_{\circ} y,\\
&L_{a\bullet b}\stackrel{\scriptsize \eqref{eq:birld}}=L_a=L_x=L_{x\bullet y}\quad \Rightarrow\quad a\bullet b\sim x\bullet y,\\
& L_{a/_{\bullet} b}\stackrel{\scriptsize \eqref{eq:birld}}=L_{(a/_{\bullet} b)\bullet b}
\stackrel{\scriptsize \eqref{eq:lq}}=L_a=L_x=L_{(x/_{\bullet} y)\bullet y}=L_{x/_{\bullet} y}\quad \Rightarrow\quad \mathord{a/_{\bullet} b}\sim \mathord{x/_{\bullet} y}.\qedhere
\end{align*}
\end{proof}

\begin{lemma}\label{lm:sf}
Let $(X,\circ,\ld_{\circ},\bullet,/_{\bullet})$ be a left distributive birack. Then the quotient birack\\
$(X/\mathord{\sim},\circ,\ld_{\circ},\bullet,/_{\bullet})$ is idempotent and equal to the left derived birack $\mathbf{B}_L(X/\mathord{\sim},\circ,\ld_{\circ})$.
\begin{proof}
By the left distributivity and \eqref{eq:birld}, for every $x,y\in X$,
\[
x\sim x\bullet y.
\]
Furthermore, by \eqref{eq:ab}, $L_x=L_xL_xL_x^{-1}=L_{x\circ x}$,
which shows that $x\sim x\circ x$ and $(X/\mathord{\sim},\circ,\ld_{\circ},\bullet,/_{\bullet})$ is idempotent.
\end{proof}
\end{lemma}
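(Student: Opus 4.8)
The plan is to invoke Theorem~\ref{th:left_congr}, which tells us that $\sim$ is a congruence of $(X,\circ,\ld_{\circ},\bullet,/_{\bullet})$, so that the quotient $(X/\mathord{\sim},\circ,\ld_{\circ},\bullet,/_{\bullet})$ is a well-defined birack; it then remains only to identify its four induced operations. Writing $[x]$ for the $\sim$-class of $x\in X$, I first observe that the birack axioms \eqref{eq:lq}--\eqref{eq:b3} together with left distributivity \eqref{eq:left} are identities, hence are inherited by the quotient. In particular $(X/\mathord{\sim},\circ,\ld_\circ)$ is again a left quasigroup satisfying left distributivity, i.e.\ a left rack, so that the left derived birack $\mathbf{B}_L(X/\mathord{\sim},\circ,\ld_\circ)$ is defined; what must be shown is that the induced $\bullet$ and $/_\bullet$ on $X/\mathord{\sim}$ are the left projection, and that the quotient is idempotent.

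First I would prove that $x\sim x\bullet y$ for all $x,y\in X$. This is exactly Lemma~\ref{lem:ld_equiv}(ii): left distributivity gives $L_x=L_{x\bullet y}$, which by \eqref{rel:sim} means $x\sim x\bullet y$. Consequently $[x]\bullet[y]=[x\bullet y]=[x]$ in the quotient. For the induced division, I would use \eqref{eq:rq}: since $(x/_\bullet y)\bullet y=x$, applying the previous observation with $x$ replaced by $x/_\bullet y$ yields $x/_\bullet y\sim(x/_\bullet y)\bullet y=x$, so $[x]/_\bullet[y]=[x]$ as well (equivalently, one may quote Lemma~\ref{lem:ld_equiv}(iii) directly). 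Hence on $X/\mathord{\sim}$ one has $[x]\bullet[y]=[x]=[x]/_\bullet[y]$, which is precisely the definition of the left derived birack, so $(X/\mathord{\sim},\circ,\ld_{\circ},\bullet,/_{\bullet})=\mathbf{B}_L(X/\mathord{\sim},\circ,\ld_\circ)$.

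For idempotency, $[x]\bullet[x]=[x]$ is the special case $y=x$ of the computation above. For the $\circ$-part I would use \eqref{eq:ab}: left distributivity gives $L_{x\circ x}=L_xL_xL_x^{-1}=L_x$, so $x\circ x\sim x$ and therefore $[x]\circ[x]=[x]$. This establishes \eqref{eq:idemp} for the quotient and completes the argument.

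I do not anticipate a real obstacle: the only delicate points are that $\sim$ is genuinely a congruence (Theorem~\ref{th:left_congr}, already available) and that $(X/\mathord{\sim},\circ,\ld_\circ)$ is a bona fide left rack so that $\mathbf{B}_L(\,\cdot\,)$ applies — and the latter holds because both ``left quasigroup'' and ``left distributive'' are equationally defined and hence passed to quotients. Everything else is a direct translation of \eqref{eq:birld}, \eqref{eq:rq} and \eqref{eq:ab} into statements about $\sim$-classes.
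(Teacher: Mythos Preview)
Your proposal is correct and follows essentially the same route as the paper's proof: both use \eqref{eq:birld} (your citation of Lemma~\ref{lem:ld_equiv}(ii)) to obtain $x\sim x\bullet y$, and both use \eqref{eq:ab} to obtain $L_{x\circ x}=L_xL_xL_x^{-1}=L_x$ for the $\circ$-idempotency. You are simply more explicit than the paper about auxiliary points (that $\sim$ is a congruence via Theorem~\ref{th:left_congr}, that the quotient inherits the left-rack structure, and the separate check for $/_\bullet$), all of which the paper leaves tacit.
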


Analogously to \eqref{rel:sim}, we can define symmetrical relation
\begin{equation}
a\backsim b \quad \Leftrightarrow\quad \mathbf{R}_a=\mathbf{R}_b \quad \Leftrightarrow\quad \forall x\in X\quad x\bullet a= x\bullet b
\end{equation}
and this relation is a congruence of every right distributive birack. If a birack is involutive then $a\sim b$ if and only if $a\backsim b$ \cite[Proposition 2.2]{ESS}.

\begin{de}
Let $(X,\circ,\ld_{\circ},\bullet,/_{\bullet})$ be a left distributive birack. The left derived birack $\mathbf{B}_L(X/\mathord{\sim},\circ,\ld_{\circ})$ is called {\em left retract} of~$X$ and denoted by $\mathrm{LRet}(X)$. One defines \emph{iterated left retraction} in the following way: ${\rm LRet}^0(X)=(X,\circ,\ld_{\circ},\bullet,/_{\bullet})$ and
${\rm LRet}^k(X)={\rm LRet}({\rm LRet}^{k-1}(X))$, for any natural number $k>1$.
\end{de}

The {\em right retract} and \emph{iterated right retraction} are defined analogously.

\begin{remark}\label{rem:join}
Let $(X,\circ,\ld_{\circ},\bullet,/_{\bullet})$ be a distributive birack and let $\Theta$ be the join of the congruences $\sim$ and $\backsim$ (the least congruence containing both of them). Then the quotient birack $(X/\mathord{\Theta},\circ,\ld_{\circ},\bullet,/_{\bullet})$ is the projection one.
\end{remark}

The intersection of the two relations here defined is
the relation
\begin{equation}
 a\approx b \quad \Leftrightarrow \quad {a\sim b} \wedge {a\backsim b}
 \quad \Leftrightarrow \quad {L_a=L_b} \wedge {\mathbf{R}_a=\mathbf{R}_b}.
\end{equation}
When a birack is distributive then this equivalence
is an intersection of two congruences and therefore a congruence. Nevertheless, it is a congruence even in the case of
general biracks, however the proof is rather complicated
and technical~\cite[Theorem 3.3]{JPZ}.

\vskip 2mm

 Let $(X,\circ,\ld_{\circ},\bullet,/_{\bullet})$ be a birack. The {\em retract} of~$X$,
 denoted by $\mathrm{Ret}(X)$, is
 the quotient birack $(X/\mathord{\approx},\circ,\ld_{\circ},\bullet,/_{\bullet})$. Similarly, as for the congruences $\sim$ and $\backsim$, one defines \emph{iterated retraction} as ${\rm Ret}^0(X)=(X,\circ,\ld_{\circ},\bullet,/_{\bullet})$ and
${\rm Ret}^k(X)={\rm Ret}({\rm Ret}^{k-1}(X))$, for any natural number $k>1$.

In the case of involutive biracks all three notions of
retracts coincide.

\begin{corollary}\label{cor:sf1}
Let $(X,\circ,\ld_{\circ},\bullet,/_{\bullet})$ be a distributive birack. Then $\mathrm{Ret}(X)$ is an idempotent birack.
\end{corollary}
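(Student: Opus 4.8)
The plan is to show directly that $x\approx x\circ x$ and $x\approx x\bullet x$ hold for every $x\in X$. Since $X$ is distributive, $\approx$ is a congruence (being the intersection of the congruences $\sim$ and $\backsim$), so all four operations descend to $\mathrm{Ret}(X)=X/\mathord{\approx}$; consequently $[x]\circ[x]=[x\circ x]=[x]$ and $[x]\bullet[x]=[x\bullet x]=[x]$ in $\mathrm{Ret}(X)$, which is exactly \eqref{eq:idemp}. Thus everything reduces to checking the two $\approx$-relations above, and each of these splits into a $\sim$-part and a $\backsim$-part.

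First I would treat $x\approx x\circ x$. For the $\sim$-part, $L_x=L_{x\circ x}$ is immediate from \eqref{eq:ab} (with $y=x$, so $L_{x\circ x}=L_xL_xL_x^{-1}=L_x$); this is already recorded in the proof of Lemma~\ref{lm:sf}. For the $\backsim$-part, $\mathbf{R}_x=\mathbf{R}_{x\circ x}$ follows from \eqref{eq:bilrd} with $y=x$, namely $\mathbf{R}_x=\mathbf{R}_{x\circ x}=\mathbf{R}_{L_x(x)}$, which uses only right distributivity. Hence $x\sim x\circ x$ and $x\backsim x\circ x$, so $x\approx x\circ x$. Dually, for $x\approx x\bullet x$: the $\sim$-part, $L_x=L_{x\bullet x}$, is \eqref{eq:birld} with $y=x$; the $\backsim$-part, $\mathbf{R}_x=\mathbf{R}_{x\bullet x}$, is \eqref{eq:ab2} with $y=x$ (the right-handed mirror of the computation in Lemma~\ref{lm:sf}). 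Hence $x\approx x\bullet x$ as well.

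I do not expect a genuine obstacle here: the statement is a direct consequence of the identities \eqref{eq:ab}, \eqref{eq:ab2}, \eqref{eq:birld}, \eqref{eq:bilrd} valid in distributive biracks, combined with the already established fact that $\approx$ is a congruence so that $\circ$ and $\bullet$ are well defined on $X/\mathord{\approx}$. The only point requiring a little care is that idempotency of $\mathrm{Ret}(X)$ needs \emph{both} the left-handed facts (those about $\sim$, as in Lemma~\ref{lm:sf}) and their right-handed counterparts (those about $\backsim$); left distributivity alone, which only gives idempotency of $X/\mathord{\sim}$ as in Lemma~\ref{lm:sf}, would not suffice.
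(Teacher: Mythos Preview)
Your proof is correct and is essentially the intended argument: the paper states this as a corollary without proof, and the implicit reasoning is exactly to combine the facts $x\sim x\circ x$ and $x\sim x\bullet x$ from Lemma~\ref{lm:sf} with their right-handed duals $x\backsim x\circ x$ and $x\backsim x\bullet x$ (coming from \eqref{eq:bilrd} and \eqref{eq:ab2}) to conclude $x\approx x\circ x$ and $x\approx x\bullet x$. Your remark that both distributivities are needed here is apt.
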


\begin{example}\label{ex:quotient}
Let $(G,\circ,\ld_{\circ},\bullet,/_{\bullet})$ be the distributive birack from Example \ref{ex:wada}. It is easy to see that, for any $x,y\in G$, 
\begin{align*}
&x\sim y\quad \Leftrightarrow \quad   xy^{-1}\in Z(G).
 \end{align*}
and 
\begin{align*}
&x\backsim y\quad \Leftrightarrow \quad   x^2=y^2.
 \end{align*}
Note that the relation $\approx$ is different than equality relation if and only if $Z(G)$ contains at least one element of order $2$.
It is also easy to see that $\sim$ is a subrelation of $\backsim$ if and only if $Z(G)$ is an elementary abelian $2$-group.

For instance, in the quaternion group~$Q_8$,
$\sim$ and $\backsim$ are different and
the relation $\approx$ is equal to the conjugation relation. Thus $(Q_8/\approx,\circ,\ld_{\circ},\bullet,/_{\bullet})$ is a $4$-element projection birack.

For an abelian group $(G,+,0)$ satisfying the condition $4x=0$, clearly the relations $\approx$ and $\backsim$ coincide. In this case, the birack $(G/\mathord{\approx},\circ,\ld_{\circ},\bullet,/_{\bullet})$ is a projection birack.
\end{example}

\section{Nilpotent permutation group}\label{sec:PG}
A birack is of {\em multipermutation level}~$k$, if $|\mathrm{Ret}^k(X)|=1$ and $|\mathrm{Ret}^{k-1}(X)|>1$.

This means that applying $k$ times the congruence $\mathord{\approx}$ to the subsequent quotient biracks, one obtains the one-element birack.

In \cite[Proposition 4.7]{GI18} Gateva-Ivanova proved that an involutive birack $(X,\circ,\ld_{\circ},\bullet,/_{\bullet})$
is of multipermutation level~$k$ if and only if it is left $k$-permutational. The very same proof works for (non-involutive) left distributive biracks. In the distributive case we have an additional equivalent condition (nilpotency of the left multiplication group). Therefore, we decided to give a different proof
that uses this condition.

Let $G$ be a group. We recall the definition of \emph{lower central series of the group $G$}, for $i\in \mathbb{N}$:
\begin{align*}
&\gamma_0(G)=G,\\
&\gamma_{i}(G)=[\gamma_{i-1}(G),G]=[G,\gamma_{i-1}(G)], \text{ for }i\geq 1.
\end{align*}

Then $G$ is \emph{nilpotent of class} $k$, if $k$ is the smallest number for which $\gamma_{k}(G)=\{1\}$.
In particular, $G$ is nilpotent of class $0$ if and only if $G$ is a trivial group.
If $k>0$, nilpotency of class~$k$ is equivalent to the property that $G/Z(G)$ is nilpotent of class $k-1$.

\begin{theorem}\label{thm:ld_nilp_i}
 Let $(X,\circ,\ld_{\circ},\bullet,/_{\bullet})$ be a left distributive idempotent birack and let $k\geq 1$. Then
 the following conditions are equivalent:
 \begin{enumerate}
  \item[(i)] $|\mathrm{LRet}^k(X)|=1$,
  \item[(ii)] $(X,\circ,\ld_{\circ},\bullet,/_{\bullet})$ is left $k$-reductive,
  \item[(iii)] $(X,\circ,\ld_{\circ},\bullet,/_{\bullet})$ is left $k$-permutational,
  \item[(iv)] $\LMlt(X)$ is nilpotent of class at most~$k-1$.
 \end{enumerate}
\end{theorem}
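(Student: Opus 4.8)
The plan is to prove the cycle (iv) $\Rightarrow$ (iii) $\Rightarrow$ (ii) $\Rightarrow$ (i) $\Rightarrow$ (iv), using the idempotency and left distributivity throughout to convert between iterated-translation expressions and products of left translations in $\LMlt(X)$. Before starting, I would record the key dictionary: by \eqref{eq:ab}, $L_{x\circ y}=L_xL_yL_x^{-1}$, so iterated products $((x_0\circ x_1)\circ\cdots)\circ x_m$ can be rewritten as $L_{((x_0\circ x_1)\circ\cdots)\circ x_{m-1}}(x_m)$, and conjugation identities let one push the $L$-subscript apart into a word in the $L_{x_i}$. A useful reformulation: left $k$-permutationality says that for all $x_1,\dots,x_k$, the product $L_{w(x_1,\dots,x_{k-1})}$ depends only on $x_1,\dots,x_{k-1}$ and not on the innermost entry, where $w$ denotes the nested $\circ$-word; and left $k$-reductivity is the stronger statement that one may delete the innermost entry entirely. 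The implication (iii) $\Rightarrow$ (ii) is then exactly Lemma~\ref{lem:red-perm}(ii) (here $m=k\geq 1$, and for $k=1$ one needs idempotency via Lemma~\ref{lem:red-perm}(i), which also applies). So that step is free.

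For (iv) $\Rightarrow$ (iii): assume $\gamma_{k-1}(\LMlt(X))=\{1\}$. The cleanest route is a commutator identity. Using \eqref{eq:ab} repeatedly, $L_{((\cdots(x_0\circ x_1)\circ x_2)\cdots)\circ x_k}$ expands to a conjugate of $L_{x_k}$ by a word in $L_{x_0},\dots,L_{x_{k-1}}$; comparing the value with $x_0$ replaced by $y_0$ amounts to comparing two conjugates of the same element by words that differ only in the $L_{x_0}$-factor, and the discrepancy is governed by an iterated commutator of length $k-1$ built from $L_{x_0}L_{y_0}^{-1}$ against $L_{x_1},\dots,L_{x_{k-1}}$ — hence trivial once $\gamma_{k-1}=\{1\}$. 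Concretely I would prove by induction on $j\geq 1$ that the element
\begin{align*}
L_{(\cdots(x_0\circ x_1)\cdots)\circ x_j}\,L_{(\cdots(y_0\circ x_1)\cdots)\circ x_j}^{-1}
\end{align*}
lies in $\gamma_{j}(\LMlt(X))$; the base case $j=1$ is $L_{x_0\circ x_1}L_{y_0\circ x_1}^{-1}=L_{x_0}L_{x_1}L_{x_0}^{-1}L_{y_0}L_{x_1}^{-1}L_{y_0}^{-1}\in\gamma_1$, and the inductive step applies \eqref{eq:ab} once more and uses $[\gamma_{j-1},\LMlt]=\gamma_j$. Taking $j=k-1$ and then applying both sides to $x_k$ (after one final $\circ$) gives (iii). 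Idempotency is not needed here, but it does no harm.

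For (ii) $\Rightarrow$ (i): left $k$-reductivity says $((\cdots(x_0\circ x_1)\cdots)\circ x_k$ is independent of $x_0$. Translating via $\sim$: modulo $\sim$, $\mathrm{LRet}(X)$ has underlying set $X/\mathord{\sim}$ with operation induced by $\circ$, and by Lemma~\ref{lm:sf} it is the left derived birack on $X/\mathord{\sim}$, hence again left distributive and idempotent, so the hypothesis descends and one can induct on $k$: left $k$-reductivity of $X$ forces left $(k-1)$-reductivity of $\mathrm{LRet}(X)$, and $1$-reductivity means $x_0\circ x_1=x_1$ for all $x_0,x_1$, i.e. all $L_x=\id$, i.e. $\sim$ collapses everything and the next retract is a point. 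Iterating, $|\mathrm{LRet}^k(X)|=1$. Finally (i) $\Rightarrow$ (iv): the kernel of the action of $\LMlt(X)$ on $X/\mathord{\sim}$ is a central subgroup (two elements congruent mod $\sim$ lie in the same $\RMlt$-orbit by \eqref{eq:birld}, but more to the point, $L_a=L_b$ forces $L_a L_b^{-1}$ to commute with every $L_x$ via \eqref{eq:ab}, since $L_{x\circ a}=L_{x\circ b}$), and $\LMlt(\mathrm{LRet}(X))$ is the corresponding quotient; so $|\mathrm{LRet}^k(X)|=1$ gives, by induction on $k$ using "nilpotent of class $\le k-1$ iff central quotient is nilpotent of class $\le k-2$", that $\LMlt(X)$ is nilpotent of class $\le k-1$.

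The main obstacle is the commutator bookkeeping in (iv) $\Rightarrow$ (iii): getting the induction hypothesis stated at the right level ($\gamma_j$ rather than $\gamma_{j-1}$ or $\gamma_{j+1}$) and correctly tracking how \eqref{eq:ab} distributes $L$ over a nested $\circ$-word so that the "difference in the $x_0$-slot" is isolated as a single commutator that can be fed into the lower central series. Everything else is either a cited lemma (Lemma~\ref{lem:red-perm}) or a standard fact about nilpotency and central quotients combined with the routine observation that $\LMlt$ of a retract is $\LMlt(X)$ modulo a central subgroup.
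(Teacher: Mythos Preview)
Your proposal is correct and rests on the same two technical ingredients as the paper: the conjugation identity $L_{x\circ y}=L_xL_yL_x^{-1}$ from \eqref{eq:ab} to push the ``difference in the $x_0$-slot'' down the lower central series, and the observation that $\LMlt(\mathrm{LRet}(X))\cong\LMlt(X)/Z(\LMlt(X))$ via the induced action on $X/\mathord{\sim}$. The only organizational difference is that the paper proves three biconditionals (ii)$\Leftrightarrow$(iii), (ii)$\Leftrightarrow$(iv), (i)$\Leftrightarrow$(iv) --- in particular showing that $\gamma_j(\LMlt(X))$ is \emph{generated} by the elements $\alpha_{j,\chi}^{-1}\beta_{j,\chi}$, which yields (ii)$\Leftrightarrow$(iv) in one stroke --- whereas you run a cycle and only need the one-sided containment in $\gamma_j$, closing the loop with a direct (ii)$\Rightarrow$(i) induction on retracts that the paper does not isolate; this is a mild simplification but not a different idea.
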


\begin{proof}
 (ii) $\Leftrightarrow$ (iii) is Lemma~\ref{lem:red-perm}.

 (ii)$\Leftrightarrow$(iv)
 
 Let $k>1$.
 We translate the notion of $k$-reductivity in the language of permutations.
 Let $(x_0,\ldots,x_k)\in X^{k+1}$ and we denote this sequence by $\chi$.
 We write, by induction, $y_{0,\chi}=x_0$ and $y_{i+1,\chi}=y_{i,\chi} \circ x_{i+1}$
 as well as $z_{1,\chi}=x_1$ and $z_{i+1,\chi}=z_{i,\chi} \circ x_{i+1}$.
 The equation of left $k$-reductivity then is $y_{k,\chi}=z_{k,\chi}$.

Let us write $\alpha_{-1,\chi}=\mathrm{id}$ and $\alpha_{i+1,\chi}=\alpha_{i,\chi} L_{x_{i+1}}\alpha_{i,\chi}^{-1}$.
 We prove by induction that $L_{y_{i,\chi}}=\alpha_{i,\chi}$, for $0\leq i< k$.
 The claim is clear for $i=0$ and
\begin{align*}
L_{y_{i+1,\chi}}=L_{y_{i,\chi} \circ x_{i+1}}=L_{L_{y_{i,\chi}}(x_{i+1})}=L_{\alpha_{i,\chi}(x_{i+1})}
 =\alpha_{i,\chi} L_{x_{i+1}}\alpha_{i,\chi}^{-1}=\alpha_{i+1,\chi}.
\end{align*}
 Analogously, we write $\beta_{0,\chi}=\mathrm{id}$ and $\beta_{i+1,\chi}=\beta_{i,\chi} L_{x_{i+1}}\beta_{i,\chi}^{-1}$
 and we get $L_{z_{i,\chi}}=\beta_{i,\chi}$.
 The left $k$-reductivity is, whenever $k>1$,
\begin{align*}
 y_{k,\chi}=z_{k,\chi} \quad \Leftrightarrow \quad y_{k-1,\chi}\circ x_k=z_{k-1,\chi}\circ x_k \quad \Leftrightarrow \quad L_{y_{k-1,\chi}}=L_{z_{k-1,\chi}} \quad \Leftrightarrow \quad \alpha_{k-1,\chi}=\beta_{k-1,\chi},
\end{align*}
  for all $\chi\in X^{k+1}$.

We now prove by induction that the group $\gamma_j(\LMlt(X))$ is generated by the set $\{\alpha_{j,\chi}^{-1}\beta_{j,\chi}\colon \chi\in X^{k+1}\}$. The claim is clear for $j=0$ since $\{\alpha_{0,\chi}^{-1}\beta_{0,\chi}=L_{x_0}^{-1}\colon x_0\in X\}$.

Let $j>0$. Then, by the induction hypothesis,
\begin{align*}
&\alpha_{j+1,\chi}^{-1}\beta_{j+1,\chi}=\alpha_{j,\chi} L_{x_{j+1}}^{-1}\alpha_{j,\chi}^{-1}\beta_{j,\chi} L_{x_{j+1}}\beta_{j,\chi}^{-1}=\alpha_{j,\chi} L_{x_{j+1}}^{-1}(\beta_{j,\chi}^{-1}\alpha_{j,\chi})^{-1} L_{x_{j+1}}(\beta_{j,\chi}^{-1}\alpha_{j,\chi})\alpha_{j,\chi}^{-1}=\\
&[L_{x_{j+1}},\beta_{j,\chi}^{-1}\alpha_{j,\chi}]^{\alpha_{j,\chi}^{-1}}\in [\LMlt(X),\gamma_j(\LMlt(X))]=\gamma_{j+1}(\LMlt(X)),
\end{align*}
which shows that $\left\langle \alpha_{j+1,\chi}^{-1}\beta_{j+1,\chi}\colon \chi\in X^{k+1}\right\rangle\subseteq \gamma_{j+1}(\LMlt(X))$.

Take 
$y\in X$ and $\alpha_{j,\chi}$, $\beta_{j,\chi}$ for some $\chi=(x_0,\ldots ,x_j,\ldots ,x_k)$. Consider the new sequence $\zeta=(x_0,\ldots ,x_j,y,x_{j+2},\ldots ,x_k)$. Obviously, $\beta_{j,\chi}^{-1}\alpha_{j,\chi}=\beta_{j,\zeta}^{-1}\alpha_{j,\zeta}$. Hence,
\begin{align*}
[L_{y},\beta_{j,\zeta}^{-1}\alpha_{j,\zeta}]=\alpha_{j,\zeta}\alpha_{j+1,\zeta}^{-1}\beta_{j+1,\zeta}\alpha_{j,\zeta}^{-1},
\end{align*}
 which shows the inverse inclusion.

We are nearing the final argument. A birack $(X,\circ,\ld_{\circ},\bullet,/_{\bullet})$ is left $k$-reductive, for $k\geq 2$,
if and only if $\alpha_{k-1,\chi}=\beta_{k-1,\chi}$ for each choice of the sequence $\chi$. This is equivalent to the fact that $\alpha_{k-1,\chi}^{-1}\beta_{k-1,\chi}=\id$ for each $\chi\in X^{k+1}$ or to the fact $\gamma_{k-1}(\LMlt(X))=\{\id\}$ and this is equivalent to $\LMlt(X)$ being nilpotent of class at most $k-1$.

For $k=1$, the group $\LMlt(X)$ is nilpotent of class~$0$ if and only if it is trivial which is clearly equivalent to $(X,\circ,\ld_{\circ},\bullet,/_{\bullet})$ being $1$-reductive, which completes the proof.

(i)$\Leftrightarrow$(iv)
It is clear that the structure of the left retract $\mathrm{LRet}(X)$ may be formally defined as the birack $(\tilde X=\{L_x\colon x\in X\},\tilde \circ,\ld_{\tilde \circ},\tilde\bullet,/_{\tilde\bullet})$ such that 
\begin{align*}
&L_x \mathbin{\tilde \circ}L_y=L_{x\circ y}\stackrel{\rm def}{=}\mathcal{L}_{L_x}(L_y),\\
&L_x \mathbin{\ld_{\tilde \circ}}L_y=L_{x\ld_\circ y},\\
&L_x \mathbin{\tilde \bullet}L_y=L_{x}= L_x \mathbin{/_{\tilde\bullet}}L_y.
\end{align*}
It is so since the mapping $\kappa\colon \mathord{X/\mathord{\sim}} \to \tilde X$, $\mathord{x/\mathord{\sim}} \mapsto L_x$ is a well-defined isomorphism of biracks.

We define the following mapping $\Phi\colon\LMlt(X)\to\LMlt(\tilde X)$: 
$$\Phi(\alpha)(L_x)=\alpha L_x\alpha^{-1}=L_{\alpha(x)}.$$
The mapping $\Phi$ is onto since $\Phi(L_y)=\mathcal{L}_{L_y}$. And it is a homomorphism since
$$\Phi(\alpha\beta)(L_x)=L_{\alpha\beta(x)}\eqrel{eq:ab1}=\Phi(\alpha)(L_{\beta(x)})\eqrel{eq:ab1}=\Phi(\alpha)\Phi(\beta)(L_x).$$
Now we compute the kernel of the homomorphism:
$$\Phi(\alpha)=\mathrm{id} \Leftrightarrow \Phi(\alpha)(L_x)=L_x \Leftrightarrow \alpha L_x\alpha^{-1}=L_x
\Leftrightarrow \alpha\in Z(\LMlt(X)).$$
Hence $\LMlt(X)$ is nilpotent of class~$k$ if and only if $\LMlt(\mathrm{LRet}(X))$ is nilpotent
of class $k-1$. We finish the proof by noticing that
$|\mathrm{LRet}(X)|=1$ if and only if
$\LMlt(X)$ is nilpotent of class~$0$.
\end{proof}

The same theorem is not true for non-idempotent
biracks. Non-idempotency of permutational birack is equivalent to the fact that $f\neq\id$. This means that
$|\mathrm{LRet}(X)|=1$ but the left multiplication group $\LMlt(X)$ is non-trivial.
However, this is the only exception.

\begin{theorem}\label{thm:ld_nilp_ni}
 Let $(X,\circ,\ld_{\circ},\bullet,/_{\bullet})$ be a left distributive birack and let $k\geq 2$. Then
 the following conditions are equivalent:
 \begin{enumerate}
  \item[(i)] $|\mathrm{LRet}^k(X)|=1$,
  \item[(ii)] $(X,\circ,\ld_{\circ},\bullet,/_{\bullet})$ is left $k$-reductive,
  \item[(iii)] $(X,\circ,\ld_{\circ},\bullet,/_{\bullet})$ is left $k$-permutational,
  \item[(iv)] $\LMlt(X)$ is nilpotent of class at most~$k-1$.
 \end{enumerate}
\end{theorem}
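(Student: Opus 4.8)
The plan is to reduce everything to the idempotent case already settled in Theorem~\ref{thm:ld_nilp_i}, by passing to the left retract. By Theorem~\ref{th:left_congr} the relation~$\sim$ is a congruence, so $(X/\mathord{\sim},\circ,\ld_{\circ})$ is a left rack; by Lemma~\ref{lm:sf} the birack $\mathrm{LRet}(X)=\mathbf{B}_L(X/\mathord{\sim},\circ,\ld_{\circ})$ is idempotent, and as a left derived birack it is left distributive. Since $k\geq 2$, we may apply Theorem~\ref{thm:ld_nilp_i} to $\mathrm{LRet}(X)$ with $k$ replaced by $k-1\geq 1$; using that $\mathrm{LRet}^{k-1}(\mathrm{LRet}(X))=\mathrm{LRet}^{k}(X)$, this yields the equivalence of the four statements: $|\mathrm{LRet}^{k}(X)|=1$; $\mathrm{LRet}(X)$ is left $(k-1)$-reductive; $\mathrm{LRet}(X)$ is left $(k-1)$-permutational; and $\LMlt(\mathrm{LRet}(X))$ is nilpotent of class at most $k-2$. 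It remains to match each of these with the corresponding condition (i)--(iv) for $X$.

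The matching $\text{(ii)}\Leftrightarrow\text{(iii)}$ is immediate from Lemma~\ref{lem:red-perm}(ii), which uses only left distributivity and $k\geq 2$. For the remaining matchings I would record two ``shift'' observations. First, $X$ is left $k$-reductive if and only if $\mathrm{LRet}(X)$ is left $(k-1)$-reductive: in the notation of the proof of Theorem~\ref{thm:ld_nilp_i} the identity of left $k$-reductivity is $y_{k,\chi}=z_{k,\chi}$ for all $\chi$, and since $y_{k,\chi}=L_{y_{k-1,\chi}}(x_k)$ and $z_{k,\chi}=L_{z_{k-1,\chi}}(x_k)$, this holds for all $x_k$ precisely when $L_{y_{k-1,\chi}}=L_{z_{k-1,\chi}}$, that is, when $y_{k-1,\chi}\sim z_{k-1,\chi}$; projecting to $X/\mathord{\sim}$, whose $\circ$ operation is the one carried over to $\mathrm{LRet}(X)$, this says exactly that $\mathrm{LRet}(X)$ is left $(k-1)$-reductive. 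Second, the homomorphism $\Phi\colon\LMlt(X)\to\LMlt(\mathrm{LRet}(X))$ constructed in the proof of Theorem~\ref{thm:ld_nilp_i} is surjective with kernel $Z(\LMlt(X))$, and that construction rests only on left distributivity; hence $\LMlt(\mathrm{LRet}(X))\cong\LMlt(X)/Z(\LMlt(X))$, and since $k-1\geq 1$, the group $\LMlt(X)$ is nilpotent of class at most $k-1$ if and only if this quotient is nilpotent of class at most $k-2$.

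Putting the pieces together: $X$ satisfies (ii) $\Leftrightarrow$ $\mathrm{LRet}(X)$ is left $(k-1)$-reductive $\Leftrightarrow$ (Theorem~\ref{thm:ld_nilp_i}) $|\mathrm{LRet}^{k}(X)|=1$, which is (i); and $X$ satisfies (iv) $\Leftrightarrow$ $\LMlt(\mathrm{LRet}(X))$ is nilpotent of class at most $k-2$ $\Leftrightarrow$ (Theorem~\ref{thm:ld_nilp_i}) $\mathrm{LRet}(X)$ is left $(k-1)$-reductive $\Leftrightarrow$ (ii). Together with $\text{(ii)}\Leftrightarrow\text{(iii)}$ this closes the cycle of equivalences.

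The points needing care are the verification that the ingredient reused from the proof of Theorem~\ref{thm:ld_nilp_i} — the construction of $\Phi$ and the identification of its kernel with $Z(\LMlt(X))$ — is genuinely independent of idempotency, together with the off-by-one bookkeeping of nilpotency classes, where the hypothesis $k\geq 2$ is essential. Indeed, for $k=1$ the statement fails, as the permutational birack with $f\neq\id$ has $|\mathrm{LRet}(X)|=1$ while $\LMlt(X)=\langle f\rangle\neq\{\id\}$ is not nilpotent of class~$0$; this is precisely the exception that forces the restriction to $k\geq 2$.
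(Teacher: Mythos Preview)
Your proof is correct and follows essentially the same approach as the paper: both invoke Lemma~\ref{lem:red-perm} for $\text{(ii)}\Leftrightarrow\text{(iii)}$, both observe that the construction of $\Phi$ with kernel $Z(\LMlt(X))$ in the proof of Theorem~\ref{thm:ld_nilp_i} does not use idempotency, and both then pass to the idempotent birack $\mathrm{LRet}(X)$ (Lemma~\ref{lm:sf}) to invoke Theorem~\ref{thm:ld_nilp_i}. The only organizational difference is that the paper notes directly that the entire argument for $\text{(ii)}\Leftrightarrow\text{(iv)}$ in Theorem~\ref{thm:ld_nilp_i} is idempotency-free, whereas you reprove this via your first ``shift'' observation and a second application of Theorem~\ref{thm:ld_nilp_i}; this is a harmless reordering of the same ingredients.
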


\begin{proof}
 (ii)$\Leftrightarrow$(iii) is Lemma~\ref{lem:red-perm}.
 The equivalence (ii)$\Leftrightarrow$(iv) was proved
 in the proof of Theorem~\ref{thm:ld_nilp_i}
 as the idempotency was not used in this part of the proof.
 The only place
 where the idempotency was used was actually the case of $k=1$ in
 (i)$\Leftrightarrow$(iv).
 Hence we can reason again that $\LMlt(X)$ is nilpotent
 of class $k$ if and only if $\LMlt(\mathrm{LRet}(X))$
 is nilpotent of class $k-1$. According to Lemma~\ref{lm:sf},
 $\mathrm{LRet}(X)$ is idempotent and therefore,
 according to Theorem~\ref{thm:ld_nilp_i},
 $\LMlt(\mathrm{LRet}(X))$
 is nilpotent of class $k-1$ if and only if $|\mathrm{LRet}^{k+1}(X)|=1$ and $|\mathrm{LRet}^{k}(X)|>1$.
\end{proof}

Since the operation~$\bullet$ was never used
in the proof, we immediately get:

\begin{corollary}\label{cor:si}
 Let $(X,\circ,\ld_\circ)$ be a left rack. Then $(X,\circ,\ld_\circ)$ is $k$-reductive
 if and only if $\LMlt(X)$ is nilpotent of class at most~$k-1$.
\end{corollary}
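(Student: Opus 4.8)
The plan is to read the statement straight off the equivalence (ii)$\Leftrightarrow$(iv) established inside the proof of Theorem~\ref{thm:ld_nilp_i}, exploiting the observation made just before the corollary: that portion of the argument never touches the operations $\bullet$ and $/_{\bullet}$. Concretely, given a left rack $(X,\circ,\ld_{\circ})$, I would first attach to it the left derived birack $\mathbf{B}_L(X,\circ,\ld_{\circ})=(X,\circ,\ld_{\circ},\bullet,/_{\bullet})$ with $x\bullet y=x=x/_{\bullet}y$; by the example following the definition of derived biracks this is a left distributive birack. Its left translations coincide with the rack translations $L_x\colon a\mapsto x\circ a$, so $\LMlt(\mathbf{B}_L(X,\circ,\ld_{\circ}))=\LMlt(X)$, and being $k$-reductive for the rack (identity~\eqref{eq:mred}, which only involves $\circ$) is literally being left $k$-reductive for this birack.

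Next I would check that the chain of equivalences proving (ii)$\Leftrightarrow$(iv) in Theorem~\ref{thm:ld_nilp_i} uses only: the translations $L_x$; the left-distributivity relations $L_{y\circ x}=L_{L_y(x)}$ and $L_{x\circ y}=L_xL_yL_x^{-1}$ from~\eqref{eq:ab}; the inductive description of $\gamma_j(\LMlt(X))$ as the group generated by the elements $\alpha_{j,\chi}^{-1}\beta_{j,\chi}$; and the reformulation of $k$-reductivity as $\gamma_{k-1}(\LMlt(X))=\{\id\}$. None of these mentions $\bullet$ or $/_{\bullet}$, and the argument already settles the degenerate case $k=1$ by hand ($\LMlt(X)$ nilpotent of class~$0$ $\Leftrightarrow$ $\LMlt(X)=\{\id\}$ $\Leftrightarrow$ $x\circ y=y$ for all $x,y$ $\Leftrightarrow$ $1$-reductivity). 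Applying this verbatim to $\mathbf{B}_L(X,\circ,\ld_{\circ})$ yields that $(X,\circ,\ld_{\circ})$ is $k$-reductive if and only if $\LMlt(X)$ is nilpotent of class at most~$k-1$. For $k\geq 2$ one may equivalently cite Theorem~\ref{thm:ld_nilp_ni} directly and dispatch $k=1$ separately as above.

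I do not expect a genuine obstacle: the only point that truly requires verification is the claim that the relevant slice of the proof of Theorem~\ref{thm:ld_nilp_i} is insensitive to the $\bullet$-structure, and this is a routine inspection rather than a new computation, since the generators $\alpha_{j,\chi}^{-1}\beta_{j,\chi}$, the commutator identity, and the passage between $k$-reductivity and nilpotency of $\gamma_{k-1}(\LMlt(X))$ all live purely in $\LMlt(X)$ and the $\circ$-structure. The only mild nuisance is bookkeeping about which theorem covers which range of $k$ — Theorem~\ref{thm:ld_nilp_ni} is stated for $k\geq 2$ — but the $k=1$ case is trivial and is already contained in the (ii)$\Leftrightarrow$(iv) reasoning of Theorem~\ref{thm:ld_nilp_i}.
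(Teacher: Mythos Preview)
Your proposal is correct and matches the paper's own argument exactly: the paper simply remarks that the operation~$\bullet$ was never used in the proof of (ii)$\Leftrightarrow$(iv) in Theorem~\ref{thm:ld_nilp_i}, so the corollary follows immediately. Your device of passing to the left derived birack $\mathbf{B}_L(X,\circ,\ld_\circ)$ and your bookkeeping about $k=1$ versus $k\geq 2$ are harmless elaborations of that same one-line observation.
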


Corollary \ref{cor:si} generalizes the result obtained by the authors for \emph{medial quandles} (a proper subclass of idempotent left racks) \cite[Theorem 5.3]{JPZ18}. The proof given there used different methods.

\begin{remark}
It is worth emphasizing that all results established for \emph{left} properties (distributivity, $m$-reductivity, $m$-permutationality, retracts) are also true for right ones, when using their dual versions.
\end{remark}

\begin{lemma}\label{lem:nilp_com}
 Let $G$ be a group and $H_1,H_2$ be its subgroups such that $[H_1,H_2]=\{1\}$ and $G=H_1H_2$.
 If both $H_1$ and $H_2$ are nilpotent of class at most~$k$,
 for some $k\in\mathbb{N}$, then $G$ is nilpotent of class at most~$k$.
 On the other hand, if~$G$ is nilpotent of class~$k$
 then~$H_1$ or $H_2$ are nilpotent of class at most~$k$.
\end{lemma}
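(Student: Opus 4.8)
The plan is to prove the sharper statement that the nilpotency class $c(G)$ of $G$ equals $\max(c(H_1),c(H_2))$; both halves of the lemma then follow at once. The crucial point is that the hypothesis $[H_1,H_2]=\{1\}$ makes the multiplication map
\[
\mu\colon H_1\times H_2\longrightarrow G,\qquad \mu(h_1,h_2)=h_1h_2,
\]
a group homomorphism: indeed $\mu((h_1,h_2)(h_1',h_2'))=h_1h_1'h_2h_2'=h_1h_2h_1'h_2'=\mu(h_1,h_2)\mu(h_1',h_2')$ after commuting $h_1'$ past $h_2$. Since $G=H_1H_2$, the map $\mu$ is onto, so $G$ is a homomorphic image of $H_1\times H_2$.

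Now I would use three standard properties of the lower central series $\gamma_i$, in the indexing of the paper (so that nilpotency of class at most $k$ means $\gamma_k=\{1\}$): (a) $\gamma_i(A\times B)=\gamma_i(A)\times\gamma_i(B)$, whence $c(H_1\times H_2)=\max(c(H_1),c(H_2))$; (b) if $f\colon A\to B$ is surjective then $f(\gamma_i(A))=\gamma_i(B)$, so a quotient of a group of class at most $k$ again has class at most $k$; (c) if $H\le G$ then $\gamma_i(H)\le\gamma_i(G)$, so a subgroup of a group of class at most $k$ has class at most $k$. From (a) and (b): if $H_1,H_2$ are nilpotent of class at most $k$, then so is $H_1\times H_2$, hence so is its image $G$ -- this is the first assertion. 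From (c): if $G$ is nilpotent of class $k$, then each of $H_1,H_2$, being a subgroup of $G$, is nilpotent of class at most $k$ -- this is the second assertion (and then, a fortiori, ``$H_1$ or $H_2$'' is). Finally, combining $c(G)\le\max(c(H_1),c(H_2))$ from (a) and (b) with $c(H_1),c(H_2)\le c(G)$ from (c) gives $c(G)=\max(c(H_1),c(H_2))$, so in fact at least one $H_i$ has class precisely $k$.

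Should one prefer a self-contained argument that does not quote facts about products and quotients, one can instead compute the lower central series of $G$ directly, proving by induction on $i$ that $\gamma_i(G)=\gamma_i(H_1)\,\gamma_i(H_2)$ -- the right-hand side being a subgroup because $\gamma_i(H_1)\le H_1$ and $\gamma_i(H_2)\le H_2$ centralize each other. The base case is $\gamma_0(G)=G=H_1H_2$, and for the inductive step one expands
\[
\gamma_{i+1}(G)=[\gamma_i(H_1)\gamma_i(H_2),\,H_1H_2]
\]
via the commutator identities $[ab,c]=[a,c]^b[b,c]$ and $[a,bc]=[a,c][a,b]^c$, using that every commutator mixing $H_1$ with $H_2$ is trivial, to arrive at $[\gamma_i(H_1),H_1]\,[\gamma_i(H_2),H_2]=\gamma_{i+1}(H_1)\gamma_{i+1}(H_2)$. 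The only slightly delicate point in this route is that the conjugates thrown up by the commutator identities must be absorbed back into the factors $\gamma_i(H_j)$; this is legitimate because $\gamma_i(H_j)$, being normal in $H_j$ and centralized by $H_{3-j}$, is normal in $G=H_1H_2$. I do not expect a genuine obstacle either way: the whole content is elementary commutator calculus, and the product/quotient argument avoids even that bookkeeping, so that is the route I would actually write up.
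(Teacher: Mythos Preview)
Your proposal is correct, and in fact your ``alternative'' route---proving $\gamma_i(G)=\gamma_i(H_1)\gamma_i(H_2)$ by induction---is exactly what the paper does. The paper's execution of that step is a bit slicker than you anticipate: rather than invoking the identities $[ab,c]=[a,c]^b[b,c]$ and worrying about absorbing conjugates, it simply writes out $[a_1a_2,b_1b_2]=a_2^{-1}a_1^{-1}b_2^{-1}b_1^{-1}a_1a_2b_1b_2$ and slides the $H_2$-letters past the $H_1$-letters one at a time to obtain $[a_1,b_1][a_2,b_2]$ directly, so the ``slightly delicate point'' you flag never actually arises.

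Your primary route---observing that $\mu\colon H_1\times H_2\to G$ is a surjective homomorphism and then quoting the behaviour of the lower central series under products, quotients, and subgroups---is a genuinely different and more conceptual argument. It has the advantage of making the equality $c(G)=\max(c(H_1),c(H_2))$ transparent without any commutator bookkeeping, at the cost of importing three (admittedly standard) facts from outside. The paper's computation is more self-contained and yields the same equality $\gamma_i(G)=\gamma_i(H_1)\gamma_i(H_2)$ explicitly, which is perhaps more in keeping with a paper that otherwise works everything out by hand. Either argument is perfectly adequate here.
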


\begin{proof}
We will prove by induction that, for each $i\in\mathbb{N}$,
\begin{align*}
\gamma_i(G)=\gamma_i(H_1)\gamma_i(H_2).
\end{align*}
By assumption,
\begin{align*}
\gamma_0(G)=G=H_1H_2=\gamma_0(H_1)\gamma_0(H_2),
\end{align*}
hence the statement is true for $i=0$.

Let $i>0$. Take $a\in\gamma_i(G)$ and $b\in G$. Then by the induction hypothesis, there are $a_1\in\gamma_i(H_1)$, $a_2\in\gamma_i(H_2)$, $b_1\in H_1$, $b_2\in H_2$ such that $a=a_1a_2$ and $b=b_1b_2$. Therefore,
\begin{align*}
&[a,b]=a^{-1}b^{-1}ab=a_2^{-1}a_1^{-1}b_2^{-1}b_1^{-1}a_1a_2b_1b_2=a_1^{-1}b_1^{-1}a_1b_1a_2^{-1}b_2^{-1}a_2b_2=\\
&[a_1,b_1][a_2,b_2]\in[\gamma_i(H_1),H_1][\gamma_i(H_2),H_2]=\gamma_{i+1}(H_1)\gamma_{i+1}(H_2).
\end{align*}
This means that $\gamma_{i+1}(G)\subseteq \gamma_{i+1}(H_1)\gamma_{i+1}(H_2)$.
The other inclusion uses the same argument.

The rest of the proof is now evident.
\end{proof}

\begin{theorem}\label{thm:main}
 Let $(X,\circ,\ld_{\circ},\bullet,/_{\bullet})$ be a distributive birack and let $k\geq 2$. Then
 the following conditions are equivalent:
 \begin{enumerate}
  \item[(i)] $|\mathrm{Ret}^k(X)|=1$,
  \item[(ii)] $(X,\circ,\ld_\circ,\bullet,/_{\bullet})$ is $k$-reductive,
  \item[(iii)] $(X,\circ,\ld_\circ,\bullet,/_{\bullet})$ is $k$-permutational,
  \item[(iv)] $\Mlt(X)$ is nilpotent of class at most~$k-1$.
 \end{enumerate}
\end{theorem}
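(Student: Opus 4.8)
The plan is to derive the equivalences (ii) $\Leftrightarrow$ (iii) $\Leftrightarrow$ (iv) from the one-sided Theorem~\ref{thm:ld_nilp_ni} and its dual, combined with the commutation Lemma~\ref{lem:com_lr} and the group-theoretic Lemma~\ref{lem:nilp_com}; and then to obtain (i) $\Leftrightarrow$ (iv) by a homomorphism argument relating $\Mlt(X)$, its centre, and $\Mlt(\mathrm{Ret}(X))$, exactly parallel to the argument for $\LMlt$ and $\sim$ in the proof of Theorem~\ref{thm:ld_nilp_i}.

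First I would handle (ii) $\Leftrightarrow$ (iii). Substituting $x_0:=x$ and then $x_0:=y$ into the left $k$-reductivity identity \eqref{eq:mred} shows that both sides of \eqref{eq:mper} equal the value obtained by deleting $x_0$, so left $k$-reductivity implies left $k$-permutationality, and dually on the right; hence $k$-reductive implies $k$-permutational. Conversely, for $k\geq 2$, Lemma~\ref{lem:red-perm}(ii) applied to the left part of $X$ and its dual applied to the right part show that $k$-permutational implies $k$-reductive. For (ii) $\Leftrightarrow$ (iv): by Theorem~\ref{thm:ld_nilp_ni}, $X$ is left $k$-reductive if and only if $\LMlt(X)$ is nilpotent of class at most $k-1$, and by the dual statement $X$ is right $k$-reductive if and only if $\RMlt(X)$ is nilpotent of class at most $k-1$; therefore $X$ is $k$-reductive exactly when both $\LMlt(X)$ and $\RMlt(X)$ are nilpotent of class at most $k-1$. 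Since $\Mlt(X)=\LMlt(X)\,\RMlt(X)$ and $[\LMlt(X),\RMlt(X)]=\{\id\}$ by Lemma~\ref{lem:com_lr}, Lemma~\ref{lem:nilp_com} shows this is equivalent to $\Mlt(X)$ being nilpotent of class at most $k-1$ (the implication from $\Mlt(X)$ back to its two factors being automatic, since subgroups of a group of nilpotency class at most $k-1$ again have class at most $k-1$).

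Finally, (i) $\Leftrightarrow$ (iv). For an arbitrary distributive birack $Z$, every element of $\Mlt(Z)$ is an automorphism of $Z$ (Proposition~\ref{prop:aut}), hence preserves the congruence $\approx$ by the conjugation identities \eqref{eq:ab1} and \eqref{eq:ab3}; thus each $\alpha\in\Mlt(Z)$ induces a permutation $\bar\alpha$ of $\mathrm{Ret}(Z)=Z/{\approx}$, and $\alpha\mapsto\bar\alpha$ is a surjective homomorphism onto $\Mlt(\mathrm{Ret}(Z))$ (it sends $L_z$ and $\mathbf{R}_z$ to the generating translations of $\mathrm{Ret}(Z)$) whose kernel is $\{\alpha\in\Mlt(Z):\alpha(z)\approx z\text{ for all }z\in Z\}$, which by \eqref{eq:ab1} and \eqref{eq:ab3} is precisely the set of $\alpha\in\Mlt(Z)$ commuting with every left and every right translation, i.e.\ $Z(\Mlt(Z))$. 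Hence $\Mlt(\mathrm{Ret}(Z))\cong\Mlt(Z)/Z(\Mlt(Z))$, so $\Mlt(Z)$ is nilpotent of class at most $c$ if and only if $\Mlt(\mathrm{Ret}(Z))$ is nilpotent of class at most $c-1$; since distributive biracks form a variety, iterating this with $Z=X,\mathrm{Ret}(X),\dots$ yields that $\Mlt(X)$ is nilpotent of class at most $k-1$ if and only if $\Mlt(\mathrm{Ret}^{k-1}(X))=\{\id\}$. The proof is completed by checking that $Y:=\mathrm{Ret}^{k-1}(X)$, which is idempotent by Corollary~\ref{cor:sf1} because $k\geq 2$, satisfies $\Mlt(Y)=\{\id\}$ if and only if $|\mathrm{Ret}(Y)|=1$: one direction is clear, and for the other, $|\mathrm{Ret}(Y)|=1$ forces all left translations of $Y$ to coincide and all right translations to coincide, while idempotency makes each of them fix every point, so all are $\id$.

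The only genuinely new ingredient is the homomorphism $\Mlt(Z)\to\Mlt(\mathrm{Ret}(Z))$ and, above all, its kernel computation: one must observe that "$\alpha(z)\approx z$ for every $z$" is literally the statement that $\alpha$ centralizes every left and right translation, which via \eqref{eq:ab1} and \eqref{eq:ab3} is exactly membership in $Z(\Mlt(Z))$; this is the step I expect to require the most care. The hypothesis $k\geq 2$ cannot be dropped, since it is precisely what guarantees (through Corollary~\ref{cor:sf1}) that the birack $\mathrm{Ret}^{k-1}(X)$, whose triviality of $\Mlt$ detects $|\mathrm{Ret}^k(X)|=1$, is idempotent — and idempotency is what makes the base case of the induction work.
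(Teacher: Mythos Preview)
Your proposal is correct and follows essentially the same approach as the paper's proof: the equivalences (ii)$\Leftrightarrow$(iii)$\Leftrightarrow$(iv) are obtained from Theorem~\ref{thm:ld_nilp_ni} and its dual together with Lemmas~\ref{lem:com_lr} and~\ref{lem:nilp_com}, and (i)$\Leftrightarrow$(iv) is proved via the surjection $\Mlt(Z)\to\Mlt(\mathrm{Ret}(Z))$ with kernel $Z(\Mlt(Z))$, reducing the nilpotency class by one at each retraction and using idempotency of $\mathrm{Ret}^{k-1}(X)$ (Corollary~\ref{cor:sf1}) for the base case. The only cosmetic difference is that the paper realizes $\mathrm{Ret}(X)$ concretely as the set of pairs $(L_x,\mathbf{R}_x)$ rather than as the abstract quotient $X/{\approx}$, but the kernel computation and the induction structure are identical.
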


\begin{proof}
Most of the claim, namely (ii)$\Leftrightarrow$(iii)$\Leftrightarrow$(iv), follows from Theorem~\ref{thm:ld_nilp_ni}, Lemmas~\ref{lem:com_lr} and \ref{lem:nilp_com}. We only have to prove (i)$\Leftrightarrow$(iv). The proof will be given by the induction on~$k$.

First, let $(X,\circ,\ld_\circ,\bullet,/_{\bullet})$ be an idempotent birack. For such biracks, (i)$\Leftrightarrow$(iv) is true even for $k=1$. It is evident that $|\mathrm{Ret}(X)|=1$ if and only if
$\Mlt(X)$ is nilpotent of class~$0$.

Let $k>1$. Similarly as in the proof of Theorem \ref{thm:ld_nilp_i}, the structure of the retract $\mathrm{Ret}(X)$ is formally defined as the birack $(\tilde X=\{(L_x,\mathbf{R}_x)\colon x\in X\},\tilde \circ,\ld_{\tilde \circ},\tilde\bullet,/_{\tilde\bullet})$ such that 
\begin{align*}
&(L_x,\mathbf{R}_x) \mathbin{\tilde \circ}(L_y,\mathbf{R}_y)=(L_{x\circ y},\mathbf{R}_{x\circ y})=(L_{x\circ y},\mathbf{R}_{y})\stackrel{\rm def}{=}\mathcal{L}_{(L_x,\mathbf{R}_x)}((L_y,\mathbf{R}_y)),\\
&(L_x,\mathbf{R}_x) \mathbin{\ld_{\tilde \circ}}(L_y,\mathbf{R}_y)=(L_{x\ld_\circ y},\mathbf{R}_{x\ld_\circ y})=(L_{x\ld_\circ y},\mathbf{R}_{y}),\\
&(L_x,\mathbf{R}_x) \mathbin{\tilde \bullet}(L_y,\mathbf{R}_y)=(L_{x\bullet y},\mathbf{R}_{x\bullet y})=(L_{x},\mathbf{R}_{x\bullet y})\stackrel{\rm def}{=}\mathcal{R}_{(L_y,\mathbf{R}_y)}((L_x,\mathbf{R}_x)),\\
&(L_x,\mathbf{R}_x) \mathbin{/_{\tilde\bullet}}(L_y,\mathbf{R}_y)=(L_{x/_{\bullet} y},\mathbf{R}_{x/_{\bullet} y})=(L_{x},\mathbf{R}_{x/_{\bullet} y}).
\end{align*}
We define the following mapping $\Phi\colon\Mlt(X)\to\Mlt(\tilde X)$: 
$$\Phi(\alpha)((L_x,\mathbf{R}_x))=(\alpha L_x\alpha^{-1},\alpha \mathbf{R}_x\alpha^{-1})=(L_{\alpha(x)},\mathbf{R}_{\alpha(y)}).$$
The mapping $\Phi$ is onto since $\Phi((L_y,\mathbf{R}_y))=\mathcal{L}_{(L_y,\mathbf{R}_y)}$. And it is a homomorphism since
$$\Phi(\alpha\beta)((L_x,\mathbf{R}_x))=(L_{\alpha\beta(x)},\mathbf{R}_{\alpha\beta(x)})=\Phi(\alpha)((L_{\beta(x)},\mathbf{R}_{\beta(x)}))=\Phi(\alpha)\Phi(\beta)((L_x,\mathbf{R}_x)).$$
Now we compute the kernel of the homomorphism:
$$\Phi(\alpha)=\mathrm{id} \Leftrightarrow \Phi(\alpha)((L_x,\mathbf{R}_x))=(L_x,\mathbf{R}_x) \Leftrightarrow  (\alpha L_x\alpha^{-1},\alpha\mathbf{R}_x\alpha^{-1})=(L_x,\mathbf{R}_x)
\Leftrightarrow \alpha\in Z(\Mlt(X)).$$
Hence $\Mlt(X)$ is nilpotent of class~$k$ if and only if $\Mlt(\mathrm{Ret}(X))$ is nilpotent
of class $k-1$. This proves the equivalence for all idempotent distributive biracks. If $(X,\circ,\ld_\circ,\bullet,/_{\bullet})$ is non-idempotent and $k\geq 2$ the proof goes the same way since by Corollary \ref{cor:sf1} the retract $\mathrm{Ret}(X)$ is idempotent and in the inductive step we do not use the idempotency.
 \end{proof}

For non-distributive biracks, there is no equivalence between multipermutation and nilpotency,
as we have remarked already in the introduction.
There is no equivalence even on a small level --
in the proof of \cite[Theorem 3.1]{CJO10}, Ced\'o, Jespers and Okni\'nski gave an example of an involutive $3$-reductive birack with an abelian permutation group. This birack is non-distributive since the only involutive and distributive biracks are $2$-reductive as shown in \cite[Corollary 5.5]{JPZ19}.

\end{document}